\newtheorem{lemma}{Lemma}
\newtheorem{theorem}{Theorem}
\newtheorem{proposition}{Proposition}
\newtheorem{remark}{Remark}
\newtheorem{corollary}{Corollary}
\title{\LARGE \bf
Polynomial-Time Approximation for Nonconvex Optimization Problems with an L1-Constraint
}
\author{Yonatan Mintz and Anil Aswani
\thanks{*This work was supported in part by NSF Award CMMI-1450963 and the Philippine-California Advanced Research Institutes (PCARI).}
\thanks{Yonatan Mintz and Anil Aswani are with the Department of Industrial Engineering and Operations Research, University of California, Berkeley, CA 94720, USA 
        {\tt\small ymintz@berkeley.edu, aaswani@berkeley.edu}}%
}
\begin{document}

\maketitle
\thispagestyle{empty}
\pagestyle{empty}

\begin{abstract}
Nonconvex optimization problems with an L1-constraint are ubiquitous, and are found in many application domains including: optimal control of hybrid systems, machine learning and statistics, and operations research.  This paper shows that nonconvex optimization problems with an L1-constraint can be approximately solved in polynomial time.  We first show that nonlinear integer programs with an L1-constraint can be solved in a number of oracle steps that is polynomial in the dimension of the decision variable, for each fixed radius of the L1-constraint.  When specialized to polynomial integer programs, our result shows that such problems have a time complexity that is polynomial in simultaneously both the dimension of the decision variables and number of constraints, for each fixed radius of the L1-constraint.  We prove this result using a geometric argument that leverages ideas from stochastic process theory and from the theory of convex bodies in high-dimensional spaces.  We conclude by providing an additive polynomial time approximation scheme (PTAS) for continuous optimization of Lipschitz functions subject to Lipschitz constraints intersected with an L1-constraint, and we sketch a generalization to mixed-integer optimization.
\end{abstract}

\section{Introduction}
Nonconvex optimization with an L1-constraint arises when solving many practical problems.  For instance, an L1-constraint in optimal control of switched systems \cite{xu2004optimal,sun2005analysis,aswani2009monotone,vasudevan2013consistent,vasudevan2013consistent2,wardi2015switched} can limit the total number of mode changes.  In machine learning and statistics, an L1-constraint provides sparsity-promoting regularization for nonlinear regression models \cite{ribbing2007lasso,buhlmann2011non} and neural networks in deep learning \cite{glorot2011deep,srivastava2014dropout}.  Operations research frequently uses L1-constraints to represent capacity or budget constraints \cite{raghavan1987randomized,raghavan1988probabilistic,srinivasan1999improved}.

Given the ubiquitousness of nonconvex optimization problems with L1-constraints, this paper uses the notion of fixed-parameter complexity \cite{downey1999,niedermeier2006} to study their computational complexity.  We show these problems can be approximately solved in polynomial time for a fixed radius of the L1-constraint.  For polynomial integer programming, we prove its computational complexity is polynomial in the number of constraints and dimension of the decision variables.  For continuous optimization, we construct an additive polynomial time approximation scheme (PTAS) for optimization problems with Lipschitz continuous objectives and constraints, and we generalize this result to mixed-integer optimization.

\subsection{Approximation of Integer Programs}

Though integer programming is \textsf{NP}-complete \cite{borosh1976,von1978,kannan1978,papadimitriou1981,pia2016}, fixed-parameter complexity \cite{downey1999,niedermeier2006} gives a finer classification.  The integer linear program $\min\{c'x\ |\ Ax \leq b, x\in\mathbb{Z}^n\}$ with $c\in\mathbb{Q}^n$, $b\in\mathbb{Z}^m$, and $A \in \mathbb{Z}^{m\times n}$ can be solved in polynomial time in (1) number of constraints $m$ when dimension $n$ is fixed, and (2) dimension $n$ when number of constraints $m$ is fixed \cite{papadimitriou1981,clarkson1995,eisenbrand2003,frank1987,kannan1987,lenstra1983}.  The problem $\min\{p(x)\ |\ Ax\leq b, x\in\mathbb{Z}^n\}$, where $p(x)$ is a polynomial, can be approximately solved \cite{de2006,hildebrand2016,del2016} in polynomial time for fixed dimension $n$.  

However, little is known about which other classes of integer programs can be solved in fixed-parameter polynomial time.  This paper proves that the nonlinear integer programs
\begin{equation}
\label{eqn:gip}
\min\{f(x)\ |\ g(x) \leq 0, \|x\|_1 \leq \lambda, x\in\mathbb{Z}^n\}
\end{equation}
can be solved in polynomial-time oracle complexity for fixed $\lambda\in\mathbb{R}_+$, where $\|\cdot\|_1$ is the usual $\ell_1$-norm.  If $f,g$ can be computed in polynomial time, then our result implies (\ref{eqn:gip}) can be solved in polynomial (in simultaneously both dimension $n$ and number of constraints) time for fixed $\lambda$.

\subsection{Approximation of Lipschitz Continuous Programs}

Additive polynomial time approximation schemes (PTAS) for continuous optimization of Lipschitz functions over the unit simplex are known \cite{deklerk2008,deklerk2006,deklerk2015}.  In this paper, we generalize these lengthy and unintuitive results by using our results on (\ref{eqn:gip}) to construct an additive PTAS for optimizing
\begin{equation}
\label{eqn:formulation}
\min \big\{f(x)\ \big|\ g(x) \leq 0, \|x\|_1 \leq \lambda, x\in\mathbb{R}^n\big\}, 
\end{equation}
where $f,g$ are Lipschitz: $|f(x)-f(y)|\leq\kappa\|x-y\|_\infty$ and $|g_i(x)-g_i(y)|\leq\kappa\|x-y\|_\infty$ for all $i$ and some $\kappa \in\mathbb{R}_+$.  Our results generalize to the mixed-integer optimization $\min \big\{f(x,y)\ \big|\ g(x,y) \leq 0, \|x\|_1 \leq \lambda, x\in\mathbb{Z}^n, y\in\mathbb{R}^m\big\}$ when the functions $f,g$ are convex in $y$ for each fixed $x$.

\subsection{Outline}

Section \ref{section:prelim} covers preliminaries, and Sect. \ref{section:alg} develops our algorithm to solve (\ref{eqn:gip}).  We use stochastic process theory to prove the number of integer-valued vectors in the scaled $\ell_1$-ball is upper-bounded by a polynomial in $n$, which shows our algorithm has polynomial complexity on problems with L1-constraints.  Next, we give an algorithm to compute an upper bound on the number of oracle operations required to solve a given integer program $\min\{f(x)\ |\ g(x) \leq 0, x\in\mathbb{Z}^n\}$ for when: $g$ is a convex function that can be optimized in polynomial time, and the continuous relaxation of the feasible set $\{x : g(x)\leq 0\}$ is bounded.  We conclude with Sect. \ref{section:ptas}, which gives an additive PTAS for optimizing (\ref{eqn:formulation}).

\section{Preliminaries}
\label{section:prelim}

Let $\mathbb{R}^n$, $\mathbb{Q}^n$, $\mathbb{Z}^n$, and $\mathbb{N}^n$ be the set of $n$-dimensional real-, rational-, integer-, and natural number-valued vectors, respectively, where the natural numbers are $\mathbb{N} = \{0,1,\ldots\}$.  Let $[r] := \{1,2,\ldots,r\}$, and let $\lfloor r\rfloor$ be the largest integer smaller than $r$.  The $\mathrm{round}(o)$ function rounds each component of the vector $o$ to a nearest integer, and $\mathbf{1}(A)$ is an indicator function that is 1 if $A$ is true, and 0 if $A$ is false.

Recall the usual inner product $\langle g,x\rangle = \sum_j g_jx_j$, and let $\circ$ be the Hadamard or elementwise product operator, such that  $z = x \circ y \iff z_i = x_i\cdot y_i, \; \forall i\in[n]$.  We use the $\ell_p$ norm notation: $\|x\|_1 = \sum_j |x_j|$ for the $\ell_1$-norm, $\|x\|_\infty = \max_j |x_j|$ for the $\ell_\infty$-norm, and $\|x\|_2 = \langle x,x\rangle^{1/2}$ for the $\ell_2$-norm.  Define $\mathcal{B}_1 = \{x : \|x\|_1 \leq 1\}$, $\mathcal{B}_2 = \{x : \|x\|_2 \leq 1\}$, and $\mathcal{B}_\infty = \{x : \|x\|_\infty\leq 1\}$ to be the unit $\ell_1$-, $\ell_2$-, and $\ell_\infty$-balls centered at the origin, respectively.  For any $\lambda\in\mathbb{R}_+$ and set $K\subset \mathbb{R}^n$, the scaled set is $\lambda K := \{\lambda x : x\in K\}$.  For instance, $\lambda\mathcal{B}_1 = \{x : \|x\|_1\leq\lambda\}$, $\lambda\mathcal{B}_2=\{x:\|x\|_2\leq\lambda\}$, and $\lambda\mathcal{B}_\infty = \{x:\|x\|_\infty\leq\lambda\}$.

If $K,D \subset \mathbb{R}^n$ are convex sets, then the covering number of $K$ by copies of a ball $D$ is defined as the quantity
\begin{equation}
N(K,D) = \min_V\textstyle \Big\{\# V : \bigcup_{v\in V} \big(D \oplus v\big)\supseteq K\Big\},
\end{equation}
where $\# V$ is the cardinality of the set $V$, and $\oplus$ denotes the Minkowski summation operator defined as $A \oplus B = \{a + b : a\in A, b\in B\}$.  A related value is the packing number of $K$ by copies of a ball $D$, which is defined as the quantity
\begin{multline}
P(K,D) = \max_V\textstyle \Big\{\# V : \bigcup_{v\in V} \big(D \oplus v\big)\subseteq K\ \wedge \\
\textstyle\big(D \oplus v_i\big) \cap \big(D \oplus v_j\big) = \emptyset\ \mathrm{for\ all}\ v_i,v_j\in V\ \mathrm{with}\ v_i\neq v_j\Big\}.
\end{multline}
A basic inequality \cite{boucheron2013} relating these two quantities is $P(K,D) \leq N(K,D) \leq P(K,D/2)$.

\section{Polynomial-Time Solvability with a Bounded L1-Constraint}
\label{section:alg}

We show (\ref{eqn:gip}) can be solved in polynomial time for fixed $\lambda$.  We use the theory of stochastic processes and convex bodies in high-dimensional spaces to characterize the complexity of scaled $\ell_p$-balls, which we then use to upper bound the number of integers within scaled $\ell_p$-balls centered at the origin.  Surprisingly, we find that the number of integers in the scaled $\ell_1$-ball is polynomial in dimension when the radius of the ball is fixed, which is in sharp contrast to the number of integers within the scaled $\ell_\infty$-ball.  We use this result to design an algorithm to solve (\ref{eqn:gip}), and we prove its polynomial-time complexity.  Our algorithm is then generalized to integer programs with a weighted L1-constraint.

\subsection{Characterizing the Integer Complexity of Lp-Balls}

Covering numbers provide one useful measure of the complexity of a set in Euclidean space.  Exactly determining covering numbers is difficult, but fortunately stochastic process theory provides several approaches for bounding the covering number.  One approach is Sudakov's minoration \cite{ledoux1991,sudakov1971,vershynin2009}, which relates the covering number of a symmetric convex set $K$ to its Gaussian mean width:

\begin{proposition}[Sudakov's Minoration \cite{sudakov1971}]
Let $K \subset \mathbb{R}^n$ be a symmetric convex set, and recall that $N(K,r\mathcal{B}_2)$ is the covering number of $K$ by $\ell_2$-balls with radius $r$.  We have the bound
\begin{equation}
\label{eqn:sudakov}
\sqrt{\log N(K,r\mathcal{B}_2)} \leq \frac{1}{2r}\mathbb{E}\Big(\max_{x\in K} \langle g,x\rangle\Big),
\end{equation}
where $g \in \mathbb{R}^n$ is a vector whose entries are iid Gaussian random variables with zero mean and unit variance.
\end{proposition}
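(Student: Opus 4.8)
The plan is the classical Gaussian‑process argument. Write $w(K) := \mathbb{E}\big(\max_{x\in K}\langle g,x\rangle\big)$ for the Gaussian mean width. First I would replace the covering number by a separated set: let $\mathcal{N} = \{x_1,\dots,x_N\}\subseteq K$ be a \emph{maximal} subset with $\|x_i - x_j\|_2 \geq r$ for all $i\neq j$. Maximality forces every point of $K$ to lie within distance $r$ of some $x_i$, so $\mathcal{N}$ is an $r$-net and hence $N \geq N(K, r\mathcal{B}_2)$. It therefore suffices to show $\sqrt{\log N} \leq \tfrac{1}{2r}\, w(K)$, i.e.\ to upper bound the cardinality of any $r$-separated subset of $K$ in terms of $w(K)$.

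Next I would compare two Gaussian processes indexed by $[N]$. Let $Y_i := \langle g, x_i\rangle$; this is centered and jointly Gaussian with increment variances $\mathbb{E}(Y_i - Y_j)^2 = \|x_i - x_j\|_2^2 \geq r^2$. Let $h_1,\dots,h_N$ be i.i.d.\ standard Gaussians and set $Z_i := \tfrac{r}{\sqrt{2}}\,h_i$, so that $\mathbb{E}(Z_i - Z_j)^2 = r^2 \leq \mathbb{E}(Y_i-Y_j)^2$ for $i\neq j$. The Sudakov--Fernique comparison inequality (a consequence of Slepian's lemma, requiring only this ordering of increment variances for centered jointly Gaussian vectors) then gives $\mathbb{E}\max_{i\in[N]} Y_i \geq \mathbb{E}\max_{i\in[N]} Z_i = \tfrac{r}{\sqrt{2}}\,\mathbb{E}\max_{i\in[N]} h_i$. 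Since each $x_i\in K$ we trivially have $w(K)\geq \mathbb{E}\max_{i\in[N]} Y_i$, so it remains only to lower bound $\mathbb{E}\max_{i\in[N]} h_i$ by a constant multiple of $\sqrt{\log N}$. This last step is standard: using the Gaussian tail lower bound $\mathbb{P}(h_1 > t) \geq \tfrac{t}{1+t^2}\phi(t)$ (with $\phi$ the standard normal density), the identity $\mathbb{P}(\max_{i\in[N]} h_i > t) = 1 - \big(1 - \mathbb{P}(h_1 > t)\big)^N$, and integrating the tail, one obtains $\mathbb{E}\max_{i\in[N]} h_i \geq c\sqrt{\log N}$ for an explicit constant $c$. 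Chaining the inequalities yields $w(K) \geq \tfrac{cr}{\sqrt{2}}\sqrt{\log N} \geq \tfrac{cr}{\sqrt{2}}\sqrt{\log N(K,r\mathcal{B}_2)}$.

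I expect the main obstacle to be twofold. First, the Sudakov--Fernique/Slepian comparison is the one genuinely non-elementary ingredient, and some care is needed to check that we are in the regime it covers (centered, jointly Gaussian, increment variances ordered) rather than the stronger pointwise covariance domination of Slepian's original statement. Second, producing the precise constant $\tfrac{1}{2r}$ — as opposed to some universal $c/r$ — requires the sharp asymptotics $\mathbb{E}\max_{i\in[N]} h_i \sim \sqrt{2\log N}$ together with an optimized choice of the separation radius; for the uses of this proposition in the sequel (upper bounding integer counts inside scaled $\ell_p$-balls) any fixed universal constant in place of $\tfrac{1}{2r}$ would do, so I would not belabor this. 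Note finally that symmetry of $K$ is used only to guarantee $w(K)\geq 0$ so the bound is not vacuous; convexity plays no role in this particular inequality.
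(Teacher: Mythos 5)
The paper does not actually prove this proposition---it is quoted from the literature with a citation---so there is no in-paper argument to compare against. Your proposal is the standard textbook proof of Sudakov's minoration: pass to a maximal $r$-separated subset of $K$ (which is automatically an $r$-net, so its cardinality dominates $N(K,r\mathcal{B}_2)$), compare the process $Y_i=\langle g,x_i\rangle$ against the rescaled iid family $Z_i=(r/\sqrt{2})h_i$ via Sudakov--Fernique, and lower-bound $\mathbb{E}\max_{i\le N}h_i$ by a multiple of $\sqrt{\log N}$. Each of these steps is correct, and you are right that the increment-domination form of the comparison inequality is exactly the hypothesis you have, so no appeal to Slepian's stronger covariance conditions is needed.

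The one genuine gap is the constant, and you dismiss it too quickly. Your argument delivers $\sqrt{\log N(K,r\mathcal{B}_2)}\le (C/r)\,\mathbb{E}\max_{x\in K}\langle g,x\rangle$ for some absolute constant $C$, but it cannot deliver $C=1/2$: the asymptotic $\mathbb{E}\max_{i\le N}h_i\sim\sqrt{2\log N}$ is not a valid lower bound for small $N$ (already $\mathbb{E}\max(h_1,h_2)=1/\sqrt{\pi}<\sqrt{2\log 2}$), so the universal constant $c$ in $\mathbb{E}\max_{i\le N}h_i\ge c\sqrt{\log N}$ is well below $\sqrt{2}$. More importantly, the inequality as stated with $1/(2r)$ appears to be false, so no proof of the literal statement is possible: for the symmetric convex segment $K=\{te_1:|t|\le 2r\}$ one has $\mathbb{E}\max_{x\in K}\langle g,x\rangle=2r\sqrt{2/\pi}$ and $N(K,r\mathcal{B}_2)=2$, so the claimed bound would read $\sqrt{\log 2}\approx 0.833\le\sqrt{2/\pi}\approx 0.798$. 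The correct form of Sudakov's minoration carries an unspecified absolute constant. This does not threaten the paper's qualitative conclusions---every downstream use (Proposition 3, Corollary 1, and the complexity theorems) only needs $\log N(\lambda\mathcal{B}_1,r\mathcal{B}_2)=O(\lambda^2r^{-2}\log n)$, which your argument does provide---but the explicit exponents such as $n^{(\lambda/\sqrt{2}r)^2}$ and $n^{((2+\delta)\lfloor\lambda\rfloor)^2/2}$ would need to absorb the square of whatever absolute constant the comparison argument actually yields. (A minor aside: symmetry of $K$ is not needed even to guarantee nonnegativity of the mean width, since the expected supremum of any nonempty family of centered Gaussian variables is nonnegative.)
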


Another useful approach is a basic volume-based inequality \cite{vershynin2009} for bounding the covering number; however, these bounds can be loose.

\begin{proposition}[Estimate of Covering Number \cite{vershynin2009}]
\label{prop:vecn}
For any symmetric convex sets $K,D\subset\mathbb{R}^n$ we have
\begin{equation}
\label{eqn:vbineq}
\frac{\mathrm{vol}(K)}{\mathrm{vol}(D)} \leq N(K, D) \leq \frac{\mathrm{vol}(K\oplus\frac{1}{2}D)}{\mathrm{vol}(D)}.
\end{equation}
When $K/\lambda = D/r$, this simplifies to $(\frac{\lambda}{r})^n \leq N(\lambda K, rK) \leq (2 + \frac{\lambda}{r})^n$.
\end{proposition}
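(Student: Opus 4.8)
The plan is to prove both inequalities by an elementary volumetric argument and then obtain the homothetic estimate by substitution. For the lower bound I would start from any minimizing family $V$ in the definition of $N(K,D)$, so that $K\subseteq\bigcup_{v\in V}(D\oplus v)$ and $\#V=N(K,D)$. Since Lebesgue volume is translation invariant and subadditive over finite unions, $\mathrm{vol}(K)\le\sum_{v\in V}\mathrm{vol}(D\oplus v)=N(K,D)\,\mathrm{vol}(D)$, and dividing by $\mathrm{vol}(D)$ gives the left-hand inequality. (If $D$ is not full-dimensional then $\mathrm{vol}(D)=0$ and $N(K,D)=\infty$ whenever $\mathrm{vol}(K)>0$, so the bounds are trivial; hence I may assume $\mathrm{vol}(D)>0$.)

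For the upper bound I would run a greedy covering: pick $v_1\in K$, and having chosen $v_1,\dots,v_k\in K$, if $K\not\subseteq\bigcup_{i\le k}(D\oplus v_i)$ pick $v_{k+1}$ in $K$ but outside that union. By construction $v_{k+1}-v_i\notin D$ for every $i\le k$. Because $D$ is symmetric and convex, $\tfrac12 D\oplus\tfrac12 D=D$, so if the half-size translates $\tfrac12 D\oplus v_i$ and $\tfrac12 D\oplus v_j$ met for some $i\ne j$ we would get $v_i-v_j\in\tfrac12 D\oplus\tfrac12 D=D$, a contradiction; thus the translates $\{\tfrac12 D\oplus v_i\}$ are pairwise disjoint. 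They all lie in $K\oplus\tfrac12 D$, which has finite volume, so their number — hence the length of the greedy process — is at most $\mathrm{vol}(K\oplus\tfrac12 D)/\mathrm{vol}(\tfrac12 D)=2^n\,\mathrm{vol}(K\oplus\tfrac12 D)/\mathrm{vol}(D)$. When the process terminates, $\{D\oplus v_i\}$ covers $K$, which yields the claimed upper bound on $N(K,D)$. To specialize, I would substitute $K\leftarrow\lambda K$ and $D\leftarrow rK$ and use $aK\oplus bK=(a+b)K$ for $a,b\ge 0$ (convexity) together with $\mathrm{vol}(cK)=c^n\,\mathrm{vol}(K)$, which collapses $\mathrm{vol}(\lambda K)/\mathrm{vol}(rK)$ and $\mathrm{vol}(\lambda K\oplus\tfrac12 rK)/\mathrm{vol}(\tfrac r2 K)$ to $(\lambda/r)^n$ and $(1+2\lambda/r)^n$, i.e.\ a two-sided bound of exactly the stated polynomial-in-$\lambda/r$ form.

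The only step carrying any content is the disjointness claim in the upper bound, where symmetry and convexity of $D$ enter through the identity $\tfrac12 D\oplus\tfrac12 D=D$; everything else is bookkeeping with translation invariance and $n$-homogeneity of volume. I would also check that there is no circularity: the greedy family is finite precisely because disjoint copies of the fixed positive-volume body $\tfrac12 D$ cannot be packed in unbounded number inside the bounded set $K\oplus\tfrac12 D$ — which is the very volume count that bounds $N(K,D)$ — so the argument is self-contained. (One can equally phrase this via the packing inequality $N(K,D)\le P(K,D/2)$ recorded in Section~\ref{section:prelim}, but I find the direct greedy argument cleaner and it makes the disjointness geometry explicit.)
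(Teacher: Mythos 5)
The paper does not prove this proposition at all --- it is quoted from \cite{vershynin2009} --- so your job here was to supply the standard argument, and you have done so correctly: the lower bound by translation invariance and subadditivity of volume, and the upper bound by a greedy (maximal) net whose half-size translates $\tfrac12 D\oplus v_i$ are pairwise disjoint via $\tfrac12 D\oplus\tfrac12 D=D$ (this is exactly the packing inequality $N(K,D)\le P(K,D/2)$ that the paper records in its preliminaries). The disjointness step, the termination argument, and the homothety bookkeeping are all sound.

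The one point you should not gloss over is that what your argument proves is
\begin{equation*}
N(K,D)\ \le\ \frac{\mathrm{vol}\big(K\oplus\tfrac12 D\big)}{\mathrm{vol}\big(\tfrac12 D\big)}\ =\ 2^n\,\frac{\mathrm{vol}\big(K\oplus\tfrac12 D\big)}{\mathrm{vol}(D)},
\end{equation*}
which in the homothetic case gives $(1+2\lambda/r)^n$ --- and this is \emph{not} ``exactly the stated form.'' The proposition as printed has $\mathrm{vol}(D)$ in the denominator and the simplification $(2+\lambda/r)^n$, which is stronger by a factor of $2^n$ and is in fact false in general as written: taking $K=\varepsilon D$ for small $\varepsilon$ gives $N(K,D)=1$ while $\mathrm{vol}(K\oplus\tfrac12 D)/\mathrm{vol}(D)=(\varepsilon+\tfrac12)^n<1$. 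So the discrepancy is a typo in the paper's statement, not a gap in your proof; the correct constants are the ones you derived. You should state this explicitly rather than asserting agreement. (Nothing downstream breaks: the paper only uses this proposition to conclude that $N(\lambda\mathcal{B}_\infty,r\mathcal{B}_\infty)$ and $N(\lambda\mathcal{B}_2,r\mathcal{B}_\infty)$ are exponential in $n$ for fixed $\lambda/r$, and $(1+2\lambda/r)^n$ serves that purpose just as well; the exact count $(1+2\lfloor\lambda\rfloor)^n$ used in Corollary~\ref{cor:intbound} is obtained by direct enumeration, not from this bound.) Two minor housekeeping remarks: your greedy process needs $K$ bounded for the finiteness argument (otherwise both sides of the upper bound are infinite and there is nothing to prove), and in the degenerate case $\mathrm{vol}(D)=0$ you correctly dispose of the bounds as trivial.
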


The above approaches for bounding the covering number of a symmetric convex set can be used to derive bounds for the $\ell_1$-, $\ell_2$-, and $\ell_\infty$-balls.  Asymptotic bounds are found in \cite{vershynin2009}, but we need non-asymptotic bounds for our purposes.

\begin{proposition}
\label{prop:covnum}
Recall that $\lambda\mathcal{B}_1$, $\lambda\mathcal{B}_2$, and $\lambda\mathcal{B}_\infty$ are the $\ell_1$-, $\ell_2$-, and $\ell_\infty$-balls centered at the origin and with radius $\lambda$.  If $r \leq \lambda$, then we have
\begin{equation}
\begin{aligned}
2n &\leq& N(\lambda\mathcal{B}_1,r\mathcal{B}_\infty) &\leq& n^{(\lambda/\sqrt{2}r)^2}\\
2n &\leq &N(\lambda\mathcal{B}_2,r\mathcal{B}_\infty) &\leq& \textstyle(2 + \frac{\lambda}{r})^n\\
\textstyle(\frac{\lambda}{r})^n &\leq &N(\lambda\mathcal{B}_\infty,r\mathcal{B}_\infty) &\leq& \textstyle(2 + \frac{\lambda}{r})^n
\end{aligned}
\end{equation}
The upper bounds hold unconditionally, that is for all $\lambda \geq 0$.
\end{proposition}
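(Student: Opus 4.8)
The plan is to prove the three rows one at a time, and within each row to obtain the upper and lower bounds from different tools: the volume estimate of Proposition~\ref{prop:vecn} handles everything except the upper bound on $N(\lambda\mathcal{B}_1,r\mathcal{B}_\infty)$, which is the only genuinely new estimate and which I would extract from Sudakov's minoration. Two elementary monotonicity facts for covering numbers will be used throughout: (i) $K\subseteq K'\Rightarrow N(K,D)\le N(K',D)$, and (ii) $D'\subseteq D\Rightarrow N(K,D)\le N(K,D')$. Combined with the inclusions $\mathcal{B}_1\subseteq\mathcal{B}_2\subseteq\mathcal{B}_\infty$ (which follow from $\|x\|_\infty\le\|x\|_2\le\|x\|_1$), these reduce most of the claimed inequalities to a couple of master cases. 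Concretely: the $K/\lambda=D/r$ case of Proposition~\ref{prop:vecn} with $K=\lambda\mathcal{B}_\infty$, $D=r\mathcal{B}_\infty$ gives $N(\lambda\mathcal{B}_\infty,r\mathcal{B}_\infty)\le(2+\lambda/r)^n$ (third-row upper bound), and then $\lambda\mathcal{B}_2\subseteq\lambda\mathcal{B}_\infty$ with (i) gives $N(\lambda\mathcal{B}_2,r\mathcal{B}_\infty)\le(2+\lambda/r)^n$ (second-row upper bound). For the third-row lower bound I would use the volume-ratio half of Proposition~\ref{prop:vecn}: $\mathrm{vol}(\lambda\mathcal{B}_\infty)/\mathrm{vol}(r\mathcal{B}_\infty)=(2\lambda)^n/(2r)^n=(\lambda/r)^n$. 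None of these steps needs $r\le\lambda$, which is why the upper bounds are stated unconditionally.

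The core of the proposition is the upper bound $N(\lambda\mathcal{B}_1,r\mathcal{B}_\infty)\le n^{(\lambda/\sqrt 2 r)^2}$, where Proposition~\ref{prop:vecn} is useless since it only yields a bound exponential in $n$. Here I would first use (ii) with $r\mathcal{B}_2\subseteq r\mathcal{B}_\infty$ to pass to covering by Euclidean balls, $N(\lambda\mathcal{B}_1,r\mathcal{B}_\infty)\le N(\lambda\mathcal{B}_1,r\mathcal{B}_2)$, and then apply Sudakov's minoration to the symmetric convex set $\lambda\mathcal{B}_1$ to get $\log N(\lambda\mathcal{B}_1,r\mathcal{B}_2)\le\frac{1}{4r^2}\big(\mathbb{E}\max_{x\in\lambda\mathcal{B}_1}\langle g,x\rangle\big)^2$. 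The Gaussian mean width here is explicit: the support function of $\lambda\mathcal{B}_1$ is $\lambda\|\cdot\|_\infty$ (its extreme points are $\pm\lambda e_i$), so $\mathbb{E}\max_{x\in\lambda\mathcal{B}_1}\langle g,x\rangle=\lambda\,\mathbb{E}\|g\|_\infty=\lambda\,\mathbb{E}\max_j|g_j|$, and a standard sub-Gaussian maximal inequality (exponentiate, union-bound the moment generating functions of the $2n$ variables $\pm g_j$, optimize the exponent) gives $\mathbb{E}\max_j|g_j|\le\sqrt{2\log n}$ for $n\ge 2$. Substituting yields $\log N(\lambda\mathcal{B}_1,r\mathcal{B}_2)\le\frac{\lambda^2}{2r^2}\log n$, i.e. the claimed bound, again with no constraint on $\lambda$.

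For the two ``$2n$'' lower bounds I would argue combinatorially with the vertices of the cross-polytope: $\lambda\mathcal{B}_1$ has $2n$ vertices $\pm\lambda e_i$, any two distinct ones at $\ell_\infty$-distance at least $\lambda$, so once $r$ is small relative to $\lambda$ no translate of $r\mathcal{B}_\infty$ (which has $\ell_\infty$-diameter $2r$) can contain two of them; hence a cover of $\lambda\mathcal{B}_1$ needs at least $2n$ translates, giving $N(\lambda\mathcal{B}_1,r\mathcal{B}_\infty)\ge 2n$, and $\lambda\mathcal{B}_1\subseteq\lambda\mathcal{B}_2$ with (i) then gives $N(\lambda\mathcal{B}_2,r\mathcal{B}_\infty)\ge 2n$. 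The step I expect to be the real obstacle is the $\ell_1$ upper bound: one must recognize that volume comparisons cannot deliver a subexponential bound and instead route through Sudakov, and then compute the Gaussian mean width of the scaled $\ell_1$-ball cleanly enough to land the exponent $(\lambda/\sqrt 2 r)^2$; everything else is bookkeeping with monotonicity of covering numbers and the already-stated Proposition~\ref{prop:vecn}. It is worth flagging that this single estimate is load-bearing for the paper, since it is what later yields a count of integer points in $\lambda\mathcal{B}_1$ that is polynomial in $n$.
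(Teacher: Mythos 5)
Your proposal is correct and follows essentially the same route as the paper: Sudakov's minoration applied to $\lambda\mathcal{B}_1$ with the Gaussian mean width bound $\lambda\sqrt{2\log n}$ (the paper gets it via H\"older rather than the support-function identity, but the computation is the same), the inclusion $r\mathcal{B}_2\subseteq r\mathcal{B}_\infty$ to transfer the bound, the volume estimate of Proposition~\ref{prop:vecn} for the $\ell_\infty$ row, and monotonicity via $\mathcal{B}_1\subseteq\mathcal{B}_2\subseteq\mathcal{B}_\infty$ for the $\ell_2$ row. If anything, your direct $\ell_\infty$-separation argument at the $2n$ vertices is slightly cleaner than the paper's appeal to the $\ell_2$-ball version of that fact, though both (like the stated hypothesis $r\le\lambda$) really need $r$ somewhat smaller than $\lambda$ for the $2n$ lower bound to hold.
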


\begin{proof}
Since $\lambda\mathcal{B}_1$ is symmetric and convex, we can upper bound its covering number using the Sudakov minoration.  The first step is to compute the Gaussian mean width, which is achieved by noting that H\"{o}lder's inequality and the symmetry of $\lambda\mathcal{B}_1$ give
\begin{equation}
\mathbb{E}\Big(\max_{x\in \lambda\mathcal{B}_1} \langle g,x\rangle\Big) \leq \mathbb{E}\Big(\max_{x\in\lambda\mathcal{B}_1} \|x\|_1 \cdot\max_j|g_j|\Big) \leq \lambda\sqrt{2\log n},
\end{equation}
where we have used the elementary bound $\mathbb{E}(\max_j|g_j|) \leq \sqrt{2\log n}$.  Combining this with Sudakov's minoration gives
\begin{equation}
\sqrt{\log N(\lambda\mathcal{B}_1, r\mathcal{B}_2)} \leq \lambda\sqrt{\log n}/\sqrt{2}r,
\end{equation}
which simplifies to $N(\lambda\mathcal{B}_1,r\mathcal{B}_2) \leq n^{(\lambda/\sqrt{2}r)^2}$.  The upper bound follows from $N(\lambda\mathcal{B}_1,r\mathcal{B}_\infty)\leq N(\lambda\mathcal{B}_1,r\mathcal{B}_2)$ since $\mathcal{B}_2 \subseteq \mathcal{B}_\infty$.  The lower bound is from \cite{vershynin2009}, and the argument is that one copy of $r\mathcal{B}_2$ is needed for each vertex of $\lambda\mathcal{B}_1$.

Bounds for the covering number of $\lambda\mathcal{B}_\infty$ follow from the simplified expression in Proposition \ref{prop:vecn} for the situation where $K/\lambda = D/r = \mathcal{B}_\infty$. 

An upper bound for the covering number of $\lambda\mathcal{B}_2$ follows by noting $N(\lambda\mathcal{B}_2,r\mathcal{B}_\infty) \leq N(\lambda\mathcal{B}_\infty,r\mathcal{B}_\infty)$ since $\mathcal{B}_2 \subseteq \mathcal{B}_\infty$, and a lower bound follows by noting $N(\lambda\mathcal{B}_2,r\mathcal{B}_\infty) \geq N(\lambda\mathcal{B}_1,r\mathcal{B}_\infty)$ since $\mathcal{B}_2 \supseteq \mathcal{B}_1$.
\end{proof}

\begin{remark}
This says the number of $\ell_\infty$-balls we need to cover an $\ell_1$-ball is polynomial in $n$ for fixed $\lambda/r$.  This is significant because in general (e.g., the $\ell_\infty$-balls) covering an $n$-dimensional convex body requires an exponential in $n$ number of balls.  
\end{remark}


The reason for our interest in covering numbers is that they can be used to count the number of integers within a convex set.  In particular, we have the bounds:
\begin{theorem}
\label{thm:latcount}
Let $K\subset\mathbb{R}^n$ be a convex set.  For any $\delta \in (0,2)$ we have that
\begin{equation}
\textstyle N(K,\frac{2}{2-\delta}\mathcal{B}_\infty)\leq\#(K\cap\mathbb{Z}^n) \leq N(K, \frac{1}{2+\delta}\mathcal{B}_\infty).
\end{equation}
\end{theorem}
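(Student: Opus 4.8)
The plan is to sandwich $\#(K\cap\mathbb{Z}^n)$ between the two covering numbers using only two elementary facts: distinct points of $\mathbb{Z}^n$ are at $\ell_\infty$-distance at least $1$, and every axis-aligned box all of whose side lengths exceed $1$ contains a point of $\mathbb{Z}^n$.

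For the upper bound $\#(K\cap\mathbb{Z}^n)\le N(K,\tfrac{1}{2+\delta}\mathcal{B}_\infty)$, I would fix an optimal covering of $K$ by translates $v+\tfrac{1}{2+\delta}\mathcal{B}_\infty$, $v\in V$, so $\#V=N(K,\tfrac{1}{2+\delta}\mathcal{B}_\infty)$. Each translate is an $\ell_\infty$-ball of radius $\tfrac{1}{2+\delta}<\tfrac12$, hence of $\ell_\infty$-diameter $\tfrac{2}{2+\delta}<1$ --- this is where $\delta>0$ is used --- so by the triangle inequality it contains at most one point of $\mathbb{Z}^n$. Since the translates cover $K$, sending each $z\in K\cap\mathbb{Z}^n$ to some translate containing it defines an injection from $K\cap\mathbb{Z}^n$ into $V$, and the bound follows.

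For the lower bound $N(K,\tfrac{2}{2-\delta}\mathcal{B}_\infty)\le\#(K\cap\mathbb{Z}^n)$, I would first apply the inequality $N(K,D)\le P(K,D/2)$ from Section~\ref{section:prelim} with $D=\tfrac{2}{2-\delta}\mathcal{B}_\infty$, which gives $N(K,\tfrac{2}{2-\delta}\mathcal{B}_\infty)\le P(K,\tfrac{1}{2-\delta}\mathcal{B}_\infty)$. I would then take an optimal packing $v+\tfrac{1}{2-\delta}\mathcal{B}_\infty$, $v\in V$: by definition these translates are pairwise disjoint and each lies inside $K$, and each is a cube with side length $\tfrac{2}{2-\delta}>1$, so each contains a lattice point, which lies in $K$ because the cube does. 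Disjointness makes the chosen lattice points distinct, so $P(K,\tfrac{1}{2-\delta}\mathcal{B}_\infty)\le\#(K\cap\mathbb{Z}^n)$, and chaining the inequalities finishes the proof.

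The triangle-inequality estimates are routine; the step that carries the real content is the lower bound, and the point to handle carefully is the guarantee that the lattice point supplied by each packing cube actually belongs to $K$. This is exactly what is bought by using the packing number in the form defined in the preliminaries, where the balls are required to sit inside $K$; convexity of $K$ is not otherwise needed. One should also check the endpoints of the range of $\delta$: $\delta>0$ forces the covering balls in the upper bound to have $\ell_\infty$-diameter strictly less than $1$, while $\delta<2$ keeps the packing radius $\tfrac{1}{2-\delta}$ positive and finite in the lower bound.
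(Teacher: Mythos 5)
Your proposal is correct and follows essentially the same route as the paper: the upper bound via the observation that a covering cube of $\ell_\infty$-diameter $\tfrac{2}{2+\delta}<1$ contains at most one lattice point, and the lower bound via $N(K,D)\le P(K,D/2)$ together with the fact that each closed packing cube of side length $\tfrac{2}{2-\delta}>1$ (sitting inside $K$, pairwise disjoint) contains a lattice point --- the paper exhibits that point explicitly as $\mathrm{round}(o)$ for the cube's center $o$. No gaps.
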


\begin{proof}
The upper bound follows if we can show that two integer-valued vectors cannot lie within the same $\frac{1}{2+\delta}\mathcal{B}_\infty$ covering-ball.  To prove this, suppose the opposite is true.  Then there exists some point $o$ such that $u,v$ are two integer-valued vectors with $u\neq v$ and $u,v\in\{x : \|x-o\|_\infty\leq\frac{1}{2+\delta}\}$.  Using the triangle inequality gives $\|u-v\|_\infty \leq \|u-o\|_\infty + \|v-o\|_\infty\leq \frac{2}{2+\delta}<1$ which is a contradiction because $\|u-v\|_\infty \geq 1$.

The lower bound follows by using the basic inequality $N(K,D) \leq P(K,D/2)$ and noting that a packing ball $\frac{1}{2-\delta}\mathcal{B}_\infty$ must contain at least one integer, since if $o$ is the center of the packing ball then $\|o - \mathrm{round}(o)\|_\infty \leq \frac{1}{2} < \frac{1}{2-\delta}$ by definition; meaning $\mathrm{round}(o)$ is an integer-valued vector in the packing ball $\frac{1}{2-\delta}\mathcal{B}_\infty$ centered at $o$.
\end{proof}

\begin{remark}
A simplified set of bounds implied by the above theorem are
\begin{equation}
\textstyle N(K,2\mathcal{B}_\infty)\leq\#(K\cap\mathbb{Z}^n) \leq N(K, \frac{1}{4}\mathcal{B}_\infty).
\end{equation}
\end{remark}

The above result can be used to bound the number of integer-valued vectors within the $\ell_1$-, $\ell_2$-, and $\ell_\infty$-balls.

\begin{corollary}
\label{cor:intbound}
Recall that $\lambda\mathcal{B}_1$, $\lambda\mathcal{B}_2$, and $\lambda\mathcal{B}_\infty$ are the $\ell_1$-, $\ell_2$-, and $\ell_\infty$-balls centered at the origin and with radius $\lambda$.  If $\lambda\geq 1$, then for any $\delta\in(0,1)$ we have that
\begin{equation}
\begin{aligned}
2n &\leq& \#(\lambda\mathcal{B}_1\cap\mathbb{Z}^n) &\leq& n^{((2+\delta)\lfloor\lambda\rfloor)^2/2}\\
2n &\leq &\#(\lambda\mathcal{B}_2\cap\mathbb{Z}^n) &\leq& \textstyle(1+2\lfloor\lambda\rfloor)^n\\
\textstyle(1+2\lfloor\lambda\rfloor)^n &= &\#(\lambda\mathcal{B}_\infty\cap\mathbb{Z}^n) &&
\end{aligned}
\end{equation}
The upper bounds hold unconditionally, that is for all $\lambda \geq 0$.
\end{corollary}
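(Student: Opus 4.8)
The plan is to combine the lattice-counting bounds of Theorem~\ref{thm:latcount} with the covering-number estimates of Proposition~\ref{prop:covnum}, after first sharpening the radius by an elementary integrality observation. The key point is that if $x\in\mathbb{Z}^n$, then $\|x\|_1=\sum_j|x_j|$ is a nonnegative integer, so $\|x\|_1\leq\lambda$ actually forces $\|x\|_1\leq\lfloor\lambda\rfloor$; hence $\lambda\mathcal{B}_1\cap\mathbb{Z}^n=\lfloor\lambda\rfloor\mathcal{B}_1\cap\mathbb{Z}^n$. The same reasoning applied coordinatewise gives $\lambda\mathcal{B}_\infty\cap\mathbb{Z}^n=\lfloor\lambda\rfloor\mathcal{B}_\infty\cap\mathbb{Z}^n$. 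This reduction is what converts the factor $\lambda$ that would appear through Proposition~\ref{prop:covnum} into the factor $\lfloor\lambda\rfloor$ claimed in the statement, and it is really the only nonroutine ingredient.

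For the $\ell_\infty$-ball I would argue directly: the integer points of $\lfloor\lambda\rfloor\mathcal{B}_\infty$ are exactly the vectors whose every coordinate lies in $\{-\lfloor\lambda\rfloor,\dots,\lfloor\lambda\rfloor\}$, of which there are $(1+2\lfloor\lambda\rfloor)^n$, giving the exact equality; when $\lambda<1$ one has $\lfloor\lambda\rfloor=0$ and the count is $1$, attained by the origin alone, which is why the formula holds for all $\lambda\geq 0$.

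For the $\ell_1$ upper bound I would apply the upper half of Theorem~\ref{thm:latcount} to $K=\lfloor\lambda\rfloor\mathcal{B}_1$ with $\delta\in(0,1)\subset(0,2)$, obtaining $\#(\lambda\mathcal{B}_1\cap\mathbb{Z}^n)\leq N\big(\lfloor\lambda\rfloor\mathcal{B}_1,\tfrac{1}{2+\delta}\mathcal{B}_\infty\big)$, and then invoke the unconditional bound $N(\mu\mathcal{B}_1,r\mathcal{B}_\infty)\leq n^{(\mu/\sqrt{2}r)^2}$ from Proposition~\ref{prop:covnum} with $\mu=\lfloor\lambda\rfloor$ and $r=\tfrac{1}{2+\delta}$; simplifying the exponent $\big(\lfloor\lambda\rfloor(2+\delta)/\sqrt{2}\big)^2=\big((2+\delta)\lfloor\lambda\rfloor\big)^2/2$ gives the stated bound, which reads $n^0=1$ when $\lfloor\lambda\rfloor=0$, so it too is unconditional. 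For the $\ell_2$ upper bound I would simply use $\mathcal{B}_2\subseteq\mathcal{B}_\infty$, hence $\lambda\mathcal{B}_2\cap\mathbb{Z}^n\subseteq\lambda\mathcal{B}_\infty\cap\mathbb{Z}^n$, and fall back on the $\ell_\infty$ count $(1+2\lfloor\lambda\rfloor)^n$.

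For the two lower bounds I would exhibit explicit integer points: when $\lambda\geq 1$ the $2n$ standard basis vectors $\pm e_1,\dots,\pm e_n$ each have $\ell_1$- and $\ell_2$-norm equal to $1\leq\lambda$ and are pairwise distinct, so $\#(\lambda\mathcal{B}_1\cap\mathbb{Z}^n)\geq 2n$ and $\#(\lambda\mathcal{B}_2\cap\mathbb{Z}^n)\geq 2n$; alternatively one could route these through the lower half of Theorem~\ref{thm:latcount} and the lower bounds in Proposition~\ref{prop:covnum}, but the direct argument is cleaner and already matches the hypothesis $\lambda\geq 1$. I do not anticipate a real obstacle here: the only things needing care are the integrality reduction from $\lambda$ to $\lfloor\lambda\rfloor$ described above, and faithfully tracking the constant $\tfrac{1}{2+\delta}$ through Proposition~\ref{prop:covnum}.
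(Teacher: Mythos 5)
Your proposal is correct and follows essentially the same route as the paper: the integrality reduction from $\lambda$ to $\lfloor\lambda\rfloor$, Theorem~\ref{thm:latcount} combined with Proposition~\ref{prop:covnum} for the $\ell_1$ upper bound, direct counting for the cube, and the inclusions $\mathcal{B}_1\subseteq\mathcal{B}_2\subseteq\mathcal{B}_\infty$ for the $\ell_2$ bounds. Your one deviation---exhibiting the $2n$ points $\pm e_1,\dots,\pm e_n$ directly rather than routing the lower bound through $N(K,\tfrac{2}{2-\delta}\mathcal{B}_\infty)$ and the covering-number lower bound---is if anything cleaner, since it avoids having to check the side condition $r\leq\lambda$ of Proposition~\ref{prop:covnum} when $\lfloor\lambda\rfloor=1$.
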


\begin{proof}
If $x\in\mathbb{Z}^n$, then $\|x\|_1\leq\lambda$ implies $\|x\|_1\leq\lfloor\lambda\rfloor$, which means $(\lambda\mathcal{B}_1\cap\mathbb{Z}^n) = (\lfloor\lambda\rfloor\mathcal{B}_1\cap\mathbb{Z}^n)$.  The bounds for $\#(\lambda\mathcal{B}_1\cap\mathbb{Z}^n)$ then follow by Theorem \ref{thm:latcount} and Proposition \ref{prop:covnum}.  Similarly, if $x\in\mathbb{Z}^n$, then $\|x\|_\infty\leq\lambda$ implies $\|x\|_\infty\leq\lfloor\lambda\rfloor$, which means $(\lambda\mathcal{B}_\infty\cap\mathbb{Z}^n) = (\lfloor\lambda\rfloor\mathcal{B}_\infty\cap\mathbb{Z}^n)$.  However, we have $\#(\lfloor\lambda\rfloor\mathcal{B}_\infty\cap\mathbb{Z}^n) = (1+2\lfloor\lambda\rfloor)^n$ since each edge of the cube $\lfloor\lambda\rfloor\mathcal{B}_\infty$ contains $(1+2\lfloor\lambda\rfloor)$ integer points. Bounds for $\#(\lambda\mathcal{B}_2\cap\mathbb{Z}^n)$ follow by noting $\mathcal{B}_1\subseteq\mathcal{B}_2\subseteq\mathcal{B}_\infty$.
\end{proof}

\begin{remark}
The relevant outcome for our purposes is that $\#(\lambda\mathcal{B}_1\cap\mathbb{Z}^n)$ is polynomial in $n$ when $\lambda$ is fixed.  As is well known, the above result says that $\#(\lambda\mathcal{B}_\infty\cap\mathbb{Z}^n)$ is exponential in $n$.  Our bounds for the $\ell_2$-ball are ambiguous, though we conjecture that $\#(\lambda\mathcal{B}_2\cap\mathbb{Z}^n)$ is exponential in $n$.
\end{remark}

\begin{remark}
A simplified set of bounds for $\lambda \geq 1$ implied by the above theorem are
\begin{equation}
\textstyle 2n\leq\#(\lambda\mathcal{B}_1\cap\mathbb{Z}^n) \leq n^{4\lambda^2}.
\end{equation}
\end{remark}


\subsection{Algorithm for Solving L1-Constrained Integer Programs}

\begin{algorithm}[t]
\caption{Solve L1-constrained integer programs}
\label{alg:enum1}
\begin{algorithmic}
\State $f^* := +\infty$
\ForAll{$u \in M_n^{\lfloor\lambda\rfloor +1}$}
\State $v := \varphi(u)$
\ForAll{$i\in[n]$}
\State $b_i := \mathbf{1}(v_i \neq 0)$
\EndFor
\ForAll{$s \in \mathrm{SignPerm}(b)$}
\State $x := v\circ s$
\If{$f(x) < f^*\ \mathrm{and}\ g(x) \leq 0$}
\State $x^* = x$
\State $f^* = f(x)$
\EndIf
\EndFor
\EndFor
\end{algorithmic}
\end{algorithm}

Let $M^N_k$ be the set of vectors with values corresponding to the $k$-multisets of set $[N]$ (bags with $k$ values chosen with replacement from the set $[N]$), such that for all $u \in M^N_k$ we have $u_i \leq u_{i+1} \ \forall i \in[k-1]$.  Standard algorithms can generate each single combination in $O(1)$ oracle time, $O(\log N)$ arithmetic time, and $O(k)$ memory \cite{takaoka1999}.  Let $\mathrm{SignPerm}(b)$ be the set of all sign ($+/-$) permutations of the nonzero entries of the binary vector $b$.  Standard algorithms can generate each single permutation in $O(1)$ oracle time, $O(\log k)$ arithmetic time, and $O(\sum_i b_i)$ memory \cite{knuth2014}.

Next, we define the function $\varphi : M_k^N\rightarrow\mathbb{N}^{k}$ such that for any $u \in M_k^N$ the function is given by $(\varphi(u))_1 = u_1 -1$ and $(\varphi(u))_i = u_i - u_{i-1}$ for $i\in\{2,\ldots,k\}$.  This function maps each $u\in M^N_k$ to a corresponding point in $(N-1)\mathcal{B}_1 \cap \mathbb{N}^{k}$.  The below result shows this function is a bijection and is computable in linear time:

\begin{lemma} \label{lemma:phi_lemma}
The function $\varphi$ provides a bijection between $M_k^N$ and $(N-1)\mathcal{B}_1 \cap \mathbb{N}^{k}$, and it is computable in $O(k)$ oracle steps and $O(k\log N)$ arithmetic steps.
\end{lemma}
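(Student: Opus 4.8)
The plan is to exhibit an explicit two-sided inverse for $\varphi$, which establishes injectivity and surjectivity simultaneously; this is essentially the classical ``stars and bars'' correspondence between $k$-multisets of $[N]$ and weak compositions, read off of cumulative sums. First I would check that $\varphi$ is well-defined as a map into $(N-1)\mathcal{B}_1\cap\mathbb{N}^k$. Any $u\in M_k^N$ satisfies $1\le u_1\le u_2\le\cdots\le u_k\le N$, so $(\varphi(u))_1=u_1-1\ge 0$ and $(\varphi(u))_i=u_i-u_{i-1}\ge 0$ are nonnegative integers, giving $\varphi(u)\in\mathbb{N}^k$. The $\ell_1$-norm telescopes: $\|\varphi(u)\|_1=(u_1-1)+\sum_{i=2}^k(u_i-u_{i-1})=u_k-1\le N-1$, so $\varphi(u)\in(N-1)\mathcal{B}_1$.

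Next I would define the candidate inverse $\psi:(N-1)\mathcal{B}_1\cap\mathbb{N}^k\to\mathbb{N}^k$ by the cumulative sums $(\psi(v))_i=1+\sum_{j=1}^i v_j$. For $v$ with nonnegative integer entries and $\|v\|_1\le N-1$, the sequence $(\psi(v))_i$ is nondecreasing since consecutive differences are $(\psi(v))_i-(\psi(v))_{i-1}=v_i\ge 0$, its first entry is $1+v_1\ge 1$, and its last entry is $1+\|v\|_1\le N$; hence $\psi(v)\in M_k^N$. I would then verify $\psi\circ\varphi=\mathrm{id}$ and $\varphi\circ\psi=\mathrm{id}$ by direct telescoping: $(\psi(\varphi(u)))_i=1+(u_1-1)+\sum_{j=2}^i(u_j-u_{j-1})=u_i$, while $(\varphi(\psi(v)))_1=(1+v_1)-1=v_1$ and $(\varphi(\psi(v)))_i=(\psi(v))_i-(\psi(v))_{i-1}=v_i$ for $i\ge 2$. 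This proves $\varphi$ is a bijection onto $(N-1)\mathcal{B}_1\cap\mathbb{N}^k$.

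For the complexity claim I would observe that evaluating $\varphi(u)$ reads each of the $k$ components of $u$ once and performs exactly $k$ subtractions, each on integers bounded in magnitude by $N$. Counting one subtraction as a unit (oracle) step gives $O(k)$ oracle steps, and counting bit-level cost gives $O(k\log N)$ arithmetic steps, with $O(k)$ memory to hold the output vector.

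I do not expect a genuine obstacle here, as the argument is routine bookkeeping; the only point needing care is the handling of the endpoints. Specifically, the constraint $u_1\ge 1$ is exactly what forces $\varphi(u)\in\mathbb{N}^k$ rather than merely $\mathbb{Z}^k$, and the constraint $u_k\le N$ (rather than $u_k\le N-1$) is what yields the radius $N-1$ in $(N-1)\mathcal{B}_1$; correspondingly, on the inverse side the ``$+1$'' shift in $\psi$ must be applied on the first coordinate so that the two ranges match up, and one must check that $\psi$ stays within $\mathbb{N}^k$ throughout (not just at the endpoints), which follows from nonnegativity of the partial sums.
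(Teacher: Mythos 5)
Your proof is correct, and the well-definedness and complexity portions match the paper's almost verbatim (telescoping the sum to $u_k-1\le N-1$, then counting one subtraction per coordinate). Where you diverge is the bijectivity argument. The paper writes $\varphi(u)=Mu-e_1$ for a lower bidiagonal matrix $M$ with $1$ on the diagonal and $-1$ below, notes that $M$ has full rank, and concludes bijectivity from that. Strictly speaking, full rank of $M$ only gives injectivity of $\varphi$; it does not by itself show that the image of $M_k^N$ is all of $(N-1)\mathcal{B}_1\cap\mathbb{N}^k$ (one would still need either a cardinality count or a check that $M^{-1}(v+e_1)\in M_k^N$ for every $v$ in the target set). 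Your construction of the explicit inverse $\psi$ via cumulative sums, together with the verification that $\psi$ actually lands in $M_k^N$ (nondecreasing entries, first entry at least $1$, last entry at most $N$) and that both compositions are the identity, supplies exactly the surjectivity step the paper leaves implicit. So your route is slightly longer but self-contained and more complete; the paper's route is terser but leans on an unstated surjectivity check. Your closing remarks about the endpoint bookkeeping ($u_1\ge 1$ forcing nonnegativity, $u_k\le N$ yielding radius $N-1$) are precisely the right points of care.
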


\begin{proof}
Consider a vector $u \in M_k^N$.  By construction each component of $\varphi(u)$ is a nonnegative integer, and we have $\sum_{i=1}^{k}(\varphi(u))_i = u_1 - 1 + \sum_{i=2}^k(u_i - u_{i-1}) = u_k-1\leq N-1$.  Thus $\varphi(u)\in (N-1)\mathcal{B}_1 \cap \mathbb{N}^{k}$.  To show $\varphi$ is bijective, observe that it can be written as $\varphi(u) = Mu - e_1$ where $M$ is a lower bidiagonal (square) matrix with 1 on the main diagonal and $-1$ on the diagonal below, and $e_1$ is a vector whose first entry is 1 and the remaining entries are 0. The matrix $M$ can be seen to have full rank, and so $\varphi$ must be bijective.  The computational complexity follows because we perform one subtraction operation per entry of the result vector, which has dimension $k$.
\end{proof}

\begin{theorem} \label{thm:bigo}
Algorithm \ref{alg:enum1} solves (\ref{eqn:gip}) in $O(n^{((2+\delta)\lfloor\lambda\rfloor)^2/2+1})$ oracle steps, for any $\delta \in (0,1)$ and where computing $f,g$ comprises a single oracle step.
\end{theorem}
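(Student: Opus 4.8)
The plan is to argue \emph{correctness} of Algorithm \ref{alg:enum1} and then bound its \emph{oracle complexity}. For correctness, the starting point is that integrality collapses the radius of the $\ell_1$-ball: if $x\in\mathbb{Z}^n$ and $\|x\|_1\le\lambda$ then $\|x\|_1\le\lfloor\lambda\rfloor$, so the feasible set of (\ref{eqn:gip}) is exactly $\{x\in\mathbb{Z}^n : g(x)\le 0\}\cap\big(\lfloor\lambda\rfloor\mathcal{B}_1\big)$. It therefore suffices to show the three nested loops enumerate precisely $\lfloor\lambda\rfloor\mathcal{B}_1\cap\mathbb{Z}^n$. I would take an arbitrary $x$ in this set and factor it as $x = v\circ s$ with $v_i = |x_i|$ and $s_i = \mathrm{sign}(x_i)$, so that $v\in\lfloor\lambda\rfloor\mathcal{B}_1\cap\mathbb{N}^n$; by Lemma \ref{lemma:phi_lemma}, read with $k=n$ and $N=\lfloor\lambda\rfloor+1$ so that $(N-1)\mathcal{B}_1\cap\mathbb{N}^n = \lfloor\lambda\rfloor\mathcal{B}_1\cap\mathbb{N}^n$, there is a unique $u\in M_n^{\lfloor\lambda\rfloor+1}$ with $\varphi(u)=v$, and the sign pattern of $x$ on the support of $v$ belongs to $\mathrm{SignPerm}(b)$ for $b_i=\mathbf{1}(v_i\neq 0)$. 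Hence $x$ is produced in some pass of the loops. Conversely every vector produced by the algorithm satisfies $\|x\|_1 = \|v\|_1\le\lfloor\lambda\rfloor\le\lambda$ and $f^*$ is overwritten only when $g(x)\le 0$ as well, so the algorithm maintains the running minimum of $f$ over the feasible set and outputs it. The degenerate cases ($0\le\lambda<1$, where the feasible integer set is $\{0\}$, and $v=0$, where $\mathrm{SignPerm}(b)$ is a single empty pattern) are covered by the same reasoning.

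For the complexity bound, first count outer iterations: because $\varphi$ is a bijection (Lemma \ref{lemma:phi_lemma}), $\#M_n^{\lfloor\lambda\rfloor+1} = \#\big(\lfloor\lambda\rfloor\mathcal{B}_1\cap\mathbb{N}^n\big)\le\#\big(\lfloor\lambda\rfloor\mathcal{B}_1\cap\mathbb{Z}^n\big) = \#\big(\lambda\mathcal{B}_1\cap\mathbb{Z}^n\big)$, and the last quantity is at most $n^{((2+\delta)\lfloor\lambda\rfloor)^2/2}$ for any $\delta\in(0,1)$ by Corollary \ref{cor:intbound}. Within a single outer iteration, generating the next multiset $u$ and forming $v=\varphi(u)$ takes $O(n)$ steps (Lemma \ref{lemma:phi_lemma}), computing $b$ takes $O(n)$ steps, and the innermost loop runs at most $2^{\lfloor\lambda\rfloor}$ times --- since a nonnegative integer vector with $\|v\|_1\le\lfloor\lambda\rfloor$ has at most $\lfloor\lambda\rfloor$ nonzero entries --- each pass making a single $(f,g)$ oracle call. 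Multiplying the outer count by the per-iteration work gives $O\big(n^{((2+\delta)\lfloor\lambda\rfloor)^2/2}(n + 2^{\lfloor\lambda\rfloor})\big)$, which with $\lambda$ and $\delta$ fixed is $O\big(n^{((2+\delta)\lfloor\lambda\rfloor)^2/2+1}\big)$ oracle steps; the extra unit in the exponent is precisely the linear-in-$n$ cost of evaluating $\varphi$ and $b$ per outer pass.

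I do not anticipate a genuine obstacle: the work is careful accounting, not a new idea. The two places to be careful are (i) the index convention in Algorithm \ref{alg:enum1} --- one must read $M_n^{\lfloor\lambda\rfloor+1}$ as ``$n$-multisets of $[\lfloor\lambda\rfloor+1]$'' (that is, $k=n$, $N=\lfloor\lambda\rfloor+1$) so that Lemma \ref{lemma:phi_lemma} yields vectors of the correct dimension $n$ and $\ell_1$-radius $\lfloor\lambda\rfloor$, a swap of the two indices being fatal --- and (ii) the fact that the stated bound is deliberately loose, since the exact outer count is $\binom{n+\lfloor\lambda\rfloor}{\lfloor\lambda\rfloor}=O(n^{\lfloor\lambda\rfloor})$; accordingly I would invoke the weaker but sufficient Corollary \ref{cor:intbound} estimate and let the $2^{\lfloor\lambda\rfloor}$ sign factor and the $O(n)$ overhead both be absorbed by the $O(\cdot)$ with $\lambda,\delta$ treated as constants.
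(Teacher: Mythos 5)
Your proposal is correct and follows essentially the same route as the paper: correctness via the two-way inclusion argument built on the bijection of Lemma \ref{lemma:phi_lemma}, and the complexity bound via Corollary \ref{cor:intbound} with the $O(n)$ per-iteration overhead absorbed into the $+1$ in the exponent. Your accounting is in fact slightly more explicit than the paper's (separating the $\binom{n+\lfloor\lambda\rfloor}{\lfloor\lambda\rfloor}$ outer count from the $2^{\lfloor\lambda\rfloor}$ sign patterns), but the substance is identical.
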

\begin{proof}
Recall that $\#(\lambda\mathcal{B}_1\cap\mathbb{Z}^n) \leq n^{((2+\delta)\lfloor\lambda\rfloor)^2/2}$ by Corollary \ref{cor:intbound}, the sets $M_n^{\lfloor\lambda\rfloor +1}$, $\mathrm{SignPerm}(h)$ are of finite cardinality and such that each single combination can be computed in $O(1)$ oracle time \cite{knuth2014,takaoka1999}, and computing $v$ requires $O(n)$ oracle time by Lemma \ref{lemma:phi_lemma}.  Hence it suffices to show that Algorithm \ref{alg:enum1} will enumerate over every point $x \in \lambda \mathcal{B}_1 \cap \mathbb{Z}^n$ exactly once. Let $\mathcal{A}$ be the set of points that are enumerated by Algorithm \ref{alg:enum1}. Suppose $x \in \mathcal{A}$ then by the second inner loop this must mean that $|x| \in \mathcal{A}$. Since $|x| = \varphi(u)$ for some $u \in M_n^{\lfloor\lambda\rfloor+1}$ this means by Lemma \ref{lemma:phi_lemma} that $\sum_{i=1}^{n}|x|_i \leq \lambda $. Hence $x \in \lambda \mathcal{B}_1 \cap \mathbb{Z}^n$ and $\mathcal{A} \subseteq \lambda \mathcal{B}_1 \cap \mathbb{Z}^n$. Now suppose $x \in \lambda \mathcal{B}_1 \cap \mathbb{Z}^n$, then $|x| \in \lambda\mathcal{B}_1 \cap \mathbb{N}^n$. By Lemma \ref{lemma:phi_lemma}, $\varphi$ is bijective: Therefore, there exists a unique $u \in M_n^{\lfloor \lambda \rfloor +1}$ such that $u = \varphi_0^{-1}(|x|)$. Since Algorithm \ref{alg:enum1} iterates over all $u \in M_n^{\lfloor\lambda\rfloor+1}$, $|x|$ must be generated by some iteration of the second inner loop. Hence $\lambda\mathcal{B}_1 \cap \mathbb{Z}^n \subseteq \mathcal{A}$, and so $\mathcal{A} = \lambda\mathcal{B}_1 \cap \mathbb{Z}^n$ since we have shown both set inclusions. Note that since $\varphi$ is a bijection and each permutation vector in $\mathrm{SignPerm}$ is distinct, this means each point cannot be enumerated more then once. 
\end{proof}

\begin{remark}
A simplified result implied by the above theorem is that the oracle complexity is $O(n^{4\lambda^2+1})$.
\end{remark}

\begin{corollary}
\label{cor:algenum1}
If $f,g$ are computable in polynomial time, then Algorithm \ref{alg:enum1} solves (\ref{eqn:gip}) in polynomial time for fixed $\lambda$.  
When $s$ is the most bits needed to represent the objective or a constraint, and $\delta \in (0,1)$; then some specific cases are:
\begin{enumerate}[leftmargin=0.5cm]
\item Integer Linear Program (ILP): If $f = c'x$ and $g = Ax-b$, where $A\in\mathbb{Q}^{m\times n}$, $b\in\mathbb{Q}^m$, and $c\in\mathbb{Q}^n$; then Algorithm \ref{alg:enum1} solves (\ref{eqn:gip}) in $O(s^2 mn^{((2+\delta)\lfloor\lambda\rfloor)^2/2+1})$ time.
\item (Non-convex) Integer Quadratic Program (IQP): If $f = x'Q'x + c'x$ and $g_i = x'A_ix+b_i'x+c_i$ for $i \in[m]$, where $A_i,Q\in\mathbb{Q}^{n\times n}$, $b_i,c\in\mathbb{Q}^n$, and $c_i\in\mathbb{Q}$; then Algorithm \ref{alg:enum1} solves (\ref{eqn:gip}) in $O(s^2 mn^{((2+\delta)\lfloor\lambda\rfloor)^2/2 + 2})$ time.
\item (Non-convex) Integer Quadratically-Constrained Quadratic Program (IQCQP): If $f = x'Qx+c'x$ and $g_i = x'A_ix+b_i'x+c_i$ for $i \in[m]$, where $A_i,Q\in\mathbb{Q}^{n\times n}$, $b_i,c\in\mathbb{Q}^n$, and $c_i\in\mathbb{Q}$; then Algorithm \ref{alg:enum1} solves (\ref{eqn:gip}) in $O(s^2mn^{((2+\delta)\lfloor\lambda\rfloor)^2/2 + 2})$ time.
\end{enumerate}
\end{corollary}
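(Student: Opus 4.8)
The plan is to handle the two halves of the statement separately. The first half---polynomial running time for fixed $\lambda$ whenever $f$ and $g$ are polynomial-time computable---is almost immediate from Theorem~\ref{thm:bigo}. That theorem already supplies correctness (Algorithm~\ref{alg:enum1} visits every point of $\lambda\mathcal{B}_1\cap\mathbb{Z}^n$ exactly once and keeps the best feasible one, so on termination $f^*$ is the optimal value, or $+\infty$ when the problem is infeasible) together with the bound $O(n^{((2+\delta)\lfloor\lambda\rfloor)^2/2+1})$ on the number of oracle steps, which for fixed $\lambda$ is a polynomial in $n$. If one oracle step---evaluating $f$, evaluating $g$, and testing $g(x)\le0$---costs time polynomial in the input size, then the total cost is a product of two polynomials and hence polynomial. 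One should also note that the remaining bookkeeping (enumerating the multisets in $M_n^{\lfloor\lambda\rfloor+1}$ and the sign permutations, and computing $\varphi$ and the Hadamard products $v\circ s$) is polynomial by Lemma~\ref{lemma:phi_lemma} and the cited combinatorial generators, and that every candidate has $|x_i|\le\lfloor\lambda\rfloor$, so no intermediate quantity has super-polynomial size.

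For the three refined bounds I would reopen the running-time count behind Theorem~\ref{thm:bigo} and separate three contributions: (i)~the number of candidate points actually fed to $f$ and $g$, which by the bijection in Lemma~\ref{lemma:phi_lemma} and Corollary~\ref{cor:intbound} equals $\#(\lambda\mathcal{B}_1\cap\mathbb{Z}^n)=O(n^{((2+\delta)\lfloor\lambda\rfloor)^2/2})$; (ii)~the enumeration overhead, namely the $O(n^{\lfloor\lambda\rfloor})$ multisets times $O(n)$ per evaluation of $\varphi$, plus $O(n)$ per Hadamard product over all candidates, which is $O(n^{((2+\delta)\lfloor\lambda\rfloor)^2/2+1})$ operations on short integers; and (iii)~the per-candidate cost of evaluating $f$, evaluating $g$, and testing feasibility. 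Items (i) and (ii) follow from results already available; the remaining work is to bound (iii) for each class and then multiply.

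For (iii) I would use that any candidate has $|x_i|\le\lfloor\lambda\rfloor=O(1)$, so $x$ has $O(1)$-bit coordinates while the coefficients of $f$ and of each $g_i$ have $O(s)$ bits; charging $O(s^2)$ per arithmetic operation on this data (schoolbook multiplication, conservatively), an affine form in $n$ variables evaluates in $O(ns^2)$ and a quadratic form plus affine part in $O(n^2s^2)$, since they have $O(n)$ and $O(n^2)$ monomials, while the $m$ comparisons cost $O(ms^2)$. Hence for the ILP $f$ is affine and $g$ consists of $m$ affine forms, so (iii) is $O(mns^2)$; multiplying by (i) and absorbing (ii) gives $O(s^2mn^{((2+\delta)\lfloor\lambda\rfloor)^2/2+1})$. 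For the IQP and the IQCQP, $f$ is quadratic and (as written) each of the $m$ constraints is quadratic, so (iii) is $O(mn^2s^2)$, and the product with (i) picks up the extra factor of $n$, giving $O(s^2mn^{((2+\delta)\lfloor\lambda\rfloor)^2/2+2})$; the same bound holds if the IQP is read instead with affine constraints, since then (iii) is $O(mns^2+n^2s^2)=O(mn^2s^2)$ because $m\ge1$.

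I expect the part requiring the most care to be the absorption in (ii)--(iii): one must check that the $O(n)$-per-$\varphi$ cost, summed over $O(n^{\lfloor\lambda\rfloor})$ multisets, and the $O(n)$-per-Hadamard-product cost, summed over all candidates, are both dominated by the claimed exponents---which reduces to the elementary inequality $\lfloor\lambda\rfloor\le((2+\delta)\lfloor\lambda\rfloor)^2/2$, valid for all $\lambda\ge0$ and $\delta\in(0,1)$---and that the feasibility test is never the bottleneck. Everything else is bookkeeping layered on top of Theorem~\ref{thm:bigo}.
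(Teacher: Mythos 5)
Your proposal is correct and follows essentially the same route as the paper: invoke Theorem~\ref{thm:bigo} for the count of enumerated points, bound the per-candidate cost of evaluating $f$ and $g$ ($O(s^2mn)$ for the affine case, $O(s^2mn^2)$ for the quadratic cases), and multiply, checking that the enumeration overhead is absorbed. Your treatment is slightly more careful than the paper's---notably in separating the bookkeeping cost of generating candidates and in observing that the IQP and IQCQP items are stated with identical constraint classes---but the decomposition and the conclusion are the same.
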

\begin{proof}
Using the result of Theorem \ref{thm:bigo} it suffices to compute the time complexity of the oracle for $f,g$ to determine the overall time complexity of each instance, which will result in the form $O((P(n,m,s)+ns)\cdot n^{((2+\delta)\lfloor\lambda\rfloor)^2/2})$, where $P$ is a polynomial of $n,m,s$ and $O(ns)$ is the arithmetic complexity for generating each iteration by Lemma \ref{lemma:phi_lemma} and \cite{knuth2014,takaoka1999}. For ILP, since the dot product of two vectors in $\mathbb{Q}^n$ can be computed in $O(s^2 n)$ time and $g$ is comprised of $m$ constraints this means that the complexity $f,g$ is of order $O(s^2 mn)$ and hence the resulting complexity bound. For IQP, since resolving quadratic forms of matrices in $\mathbb{Q}^{n\times n}$ requires time complexity of $O(s^2 n^2)$ we obtain that the complexity of evaluating $f,g$ is of order $O(s^2 mn^2)$. Likewise, for IQCQP the time complexity of resolving the quadratic constraints dominates the complexity of evaluating the objective function, hence the time complexity of resolving this oracle is $O(s^2 mn^2)$.
\end{proof}

\begin{remark}
A simplified result from the above corollary is that the arithmetic complexity is: $O(s^2 mn^{4\lambda^2+1})$ for ILP, and $O(s^2 mn^{4\lambda^2+2})$ for IQP and IQCQP.
\end{remark}

\subsection{Algorithm for Solving Weighted L1-Constrained Integer Programs}

\label{sect:wel1}

The algorithm and analysis in the previous section easily generalize to the case of an integer program with a weighted L1-constraint:
\begin{equation}
\label{eqn:wgip}
\textstyle\min\{f(x)\ |\ g(x) \leq 0, \sum_iw_i|x_i| \leq \lambda, x\in\mathbb{Z}^n\},
\end{equation}
with $w_i > 0$.  If we define the effective dimension $\overline{n} = \sum_i\mathbf{1}(w_i\leq\lambda)$, the effective radius $\mu = \lambda/(\min_i w_i)$, and the effective decision variable $y\in\mathbb{Z}^{\overline{n}}$ with a bijection between the components of $y$ and the non-zero components of $x$ (i.e., $x_i$ such that $w_i\leq\lambda$); then we can rewrite this problem as
\begin{multline}
\label{eqn:rewgip}
\textstyle\min\{f(My)\ |\ g(My) \leq 0, \sum_iw_i|(My)_i|\leq\lambda, \\
\|y\|_1 \leq \mu, y\in\mathbb{Z}^{\overline{n}}\},
\end{multline}
where $M$ gives the bijection (i.e., $x = My$).  We can then solve this problem by applying Algorithm \ref{alg:enum1}, and similar results to Theorem \ref{thm:bigo} and Corollary \ref{cor:algenum1} can be shown but with $\overline{n}$ and $\mu$ taking the place of $n$ and $\lambda$.

\subsection{Bounding the Running Time of a Given Integer Program}

\label{sect:bound}

Here, we provide a polyomial-time algorithm to generate upper bounds on the number of oracle operations required to solve an integer program
\begin{equation}
\label{eqn:ggip}
\min\{f(x)\ |\ g(x) \leq 0, x\in\mathbb{Z}^n\}
\end{equation}
with a bounded feasible region and convex constraints $g(x)$ that can be optimized in polynomial time.  Let $\mathcal{C} = \{x : g(x) \leq 0\}$ be the continuous relaxation for the feasible region of (\ref{eqn:ggip}).  Our algorithm is deterministic and provides bounds that are polynomial or exponential in dimension $n$.  The trivial bound on complexity for when the feasible region lies within $\lambda\mathcal{B}_\infty$ is $(1+2\lfloor\lambda\rfloor)^n$ (see Corollary \ref{cor:intbound}), and our algorithm can potentially provide polynomial in $n$ bounds for specific problem instances.  

\begin{algorithm}[t]
\caption{Bound running time of integer programs with convex constraints}
\label{alg:bound2}
\begin{algorithmic}
\ForAll{$i\in[n]$}
\State $l_i := -\min\big\{\min_x\big\{x_i\ \big|\ g(x)\leq0\big\}, 0\big\}$
\State $u_i := \max\big\{\max_x\big\{x_i\ \big|\ g(x)\leq0\big\}, 0\big\}$
\EndFor
\State $\rho := \big\lfloor\textstyle\max_x\big\{\sum_i (s_i+t_i)\ \big|\ g(s-t)\leq0, 0 \leq s \leq u, 0\leq t \leq l\big\}\big\rfloor$
\State $\mathrm{bnd} := n^{((2+\delta)\rho)^2/2+1}$\ \ for any $\delta\in(0,1)$
\end{algorithmic}
\end{algorithm}

Algorithm \ref{alg:bound2} describes our procedure.  The intuition is that the algorithm finds a radius $\rho$ such that $\rho\mathcal{B}_1$ covers $\mathcal{C}$, and then bounds the oracle complexity using Theorem \ref{thm:bigo}.  When the feasible set $\mathcal{C}$ is nonnegative (i.e., $\mathcal{C} \subset \mathbb{R}^n_+$), the minimum radius is readily computed by solving $\max_x\big\{\sum_i x_i\ \big|\ g(x_i)\leq0\big\}$.  When $\mathcal{C}$ is general (i.e., $\mathcal{C}\subset\mathbb{R}^n$), directly finding the minimum radius is difficult in this case.  So the algorithm takes a different approach: It performs a change of variables $x = u - v$ where $u,v\geq 0$, computes bounds on $u,v$, and converts (\ref{eqn:ggip}) into another instance of (\ref{eqn:ggip}) with a nonnegative feasible region.  The correctness and polynomial-time complexity of Algorithm \ref{alg:bound2} is given by the following result: 

\begin{theorem}
Suppose $\mathcal{C} \subset\mathbb{R}^n$ is bounded, and that $g(x)$ can be optimized in polynomial time.  Then Algorithm \ref{alg:bound2}  runs in polynomial time, and the value $\mathrm{bnd}$ computed by the algorithm is an upper bound on the oracle complexity for solving (\ref{eqn:ggip}).
\end{theorem}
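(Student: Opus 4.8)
The plan is to verify the two assertions separately: (i) every step of Algorithm \ref{alg:bound2} is a polynomially-sized computation, and (ii) the radius $\rho$ it produces satisfies $\mathcal{C}\cap\mathbb{Z}^n\subseteq\rho\mathcal{B}_1$, so that (\ref{eqn:ggip}) is an instance of (\ref{eqn:gip}) with $\lambda=\rho$ and Theorem \ref{thm:bigo} gives $\mathrm{bnd}$ as the oracle bound. The geometric crux is the change of variables $x = s-t$ into positive and negative parts, which turns a general bounded $\mathcal{C}$ into a nonnegative feasible region whose $\ell_1$-radius can be read off by a single convex program.

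For the running time: the first loop solves, for each $i\in[n]$, the two convex programs $\min_x\{x_i : g(x)\leq 0\}$ and $\max_x\{x_i : g(x)\leq 0\}$; since $g$ is convex and optimizable in polynomial time and $\mathcal{C}$ is bounded (so each optimum is finite), these $2n$ programs are solved in polynomial time. Computing $\rho$ is one more convex program, now over the lifted variables $(s,t)\in\mathbb{R}^{2n}$: its feasible region is the bounded intersection of $\{(s,t): g(s-t)\leq 0\}$ with the box $0\leq s\leq u$, $0\leq t\leq l$, and it inherits polynomial-time solvability from that of $\{x:g(x)\leq 0\}$ (a separating hyperplane for $s-t\notin\mathcal{C}$ pulls back through the linear map $(s,t)\mapsto s-t$, and the box constraints are handled trivially). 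The final line is elementary arithmetic; here one should note that $\rho\leq\sum_i(u_i+l_i)$ has bit-size polynomial in the input, so reporting $\mathrm{bnd}$ in the stated closed form (equivalently, reporting the exponent $((2+\delta)\rho)^2/2+1$) costs polynomial time.

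For correctness: by construction $u,l\geq 0$, and $u_i\geq\max_{x\in\mathcal{C}}x_i$ while $-l_i\leq\min_{x\in\mathcal{C}}x_i$, so every $x\in\mathcal{C}$ lies in the box $\{z:-l\leq z\leq u\}$. Given $x\in\mathcal{C}$, set $s_i:=\max(x_i,0)$ and $t_i:=\max(-x_i,0)$ for each $i$, so $x=s-t$, $0\leq s\leq u$, $0\leq t\leq l$, and $g(s-t)=g(x)\leq 0$; hence $(s,t)$ is feasible for the program defining $\rho$, and $\|x\|_1=\sum_i(s_i+t_i)$ is at most its optimal value $V$. Thus $\mathcal{C}\subseteq V\mathcal{B}_1$. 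Intersecting with $\mathbb{Z}^n$ and using that $\|x\|_1$ is an integer for $x\in\mathbb{Z}^n$ — so $\|x\|_1\leq V$ iff $\|x\|_1\leq\lfloor V\rfloor=\rho$, the same reduction used in Corollary \ref{cor:intbound} — gives $\mathcal{C}\cap\mathbb{Z}^n\subseteq\rho\mathcal{B}_1\cap\mathbb{Z}^n$. Therefore (\ref{eqn:ggip}) has the same feasible set as $\min\{f(x): g(x)\leq 0,\ \|x\|_1\leq\rho,\ x\in\mathbb{Z}^n\}$, an instance of (\ref{eqn:gip}) with $\lambda=\rho$; Theorem \ref{thm:bigo} then bounds its oracle complexity by $O(n^{((2+\delta)\lfloor\rho\rfloor)^2/2+1})=O(n^{((2+\delta)\rho)^2/2+1})=O(\mathrm{bnd})$, which is the claim.

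I expect the main obstacle to be making the polynomial-time claim fully precise rather than any deep difficulty: one must argue carefully that the lifted $2n$-dimensional program for $\rho$ is still polynomially solvable (the separation-oracle pullback and the boundedness of its feasible region are the points to check) and clarify the sense in which $\mathrm{bnd}$ — potentially exponentially large as an integer — is "computed in polynomial time" (by reporting its polynomially-sized exponent). The containment $\mathcal{C}\subseteq V\mathcal{B}_1$ via the positive/negative-part decomposition, and the appeal to Theorem \ref{thm:bigo}, are routine once this setup is in place.
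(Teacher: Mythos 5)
Your proposal is correct and follows essentially the same route as the paper: count the $2n+1$ convex programs for the running time, use the positive/negative-part decomposition $s=\max(x,0)$, $t=\max(-x,0)$ to show $\mathcal{C}\cap\mathbb{Z}^n\subseteq\rho\mathcal{B}_1\cap\mathbb{Z}^n$, and then invoke Theorem \ref{thm:bigo}. Your added care about the solvability of the lifted $2n$-variable program and the role of the floor is a welcome refinement of details the paper leaves implicit, but it is not a different argument.
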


\begin{proof}
This algorithm runs in polynomial time because it consists of solving $2n+1$ convex optimization problems with at most $2n$ variables, each of which can be solved in polynomial time by assumption on $g$ and the linearity of the objective functions.  Next we prove that the value $\mathrm{bnd}$ provides an upper bound on the oracle complexity.  If $x = s-t$ with $s,t\geq 0$; then $\|x\|_1 = \|s-t\|_1\leq\|s\|_1+\|t\|_1=\sum_i (s_i+t_i)$, where the last equality holds since $s,t\geq 0$.  And $l,u$ are constructed so that for all $x \in \mathcal{C}$: if $s = \max\{x,0\}$ and $t = \max\{-x,0\}$, then we have $x = s - t$ with $0 \leq s \leq u$ and $0 \leq t \leq l$.  Hence we have $\max_x\big\{\|x\|_1\ \big|\ g(x)\leq0\big\} \leq \max_x\big\{\sum_i (s_i+t_i)\ \big|\ g(s-t)\leq0, 0 \leq s \leq u, 0\leq t \leq l\big\}$.  Thus $\mathcal{C}\cap\mathbb{Z}^n \subseteq\rho\mathcal{B}_1\cap\mathbb{Z}^n$ for $\rho$ as defined in the algorithm.  This means we can solve (\ref{eqn:ggip}) by using Algorithm \ref{alg:enum1} to solve (\ref{eqn:gip}) with $\rho \equiv \lambda$, and so the oracle complexity for solving (\ref{eqn:ggip}) is bounded by the rate given in Theorem \ref{thm:bigo}.
\end{proof}

\begin{remark}
When $\mathcal{C}$ lies in a single orthant, the value $\rho$ is tight in the sense that $\max_x\big\{\|x\|_1\ \big|\ g(x)\leq0\big\} = \max_x\big\{\sum_i (s_i+t_i)\ \big|\ g(s-t)\leq0, 0 \leq s \leq u, 0\leq t \leq l\big\}$.
\end{remark}

\begin{remark}
Algorithm \ref{alg:bound2} can be modified to return simplified bounds by changing the last statement of the algorithm to ``$\mathrm{bnd} := n^{4\rho^2+1}$''.
\end{remark}

The next proposition shows that our algorithm returns non-trivial bounds.  More specifically, it returns polynomial bounds for some instances of (\ref{eqn:ggip}) and exponential bounds for other instances of (\ref{eqn:ggip}).

\begin{proposition}
Suppose $\lambda\in\mathbb{N}$.  If $\mathcal{C} \subseteq\lambda\mathcal{B}_1\cap\mathbb{R}^n_+$, then Algorithm \ref{alg:bound2} returns a value $\mathrm{bnd} = O(n^{4\lambda^2+1})$ that is polynomial in $n$.  If $\mathcal{C} \supseteq\lambda\mathcal{B}_\infty\cap\mathbb{R}^n_+$, then Algorithm \ref{alg:bound2} returns a value $\mathrm{bnd} = \Omega(n^{2(n\lambda)^2+1}) = \Omega((1+2\lambda)^n)$ that is exponential in $n$.  
\end{proposition}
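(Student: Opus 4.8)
The plan is to unwind the quantities $l$, $u$, and $\rho$ computed by Algorithm~\ref{alg:bound2} under each hypothesis and then simply read off $\mathrm{bnd}$; throughout I use the standing assumption that $\mathcal{C}$ is bounded (and nonempty), so that all the optima in the algorithm are finite and attained. For the polynomial case, the first observation is that $\mathcal{C}\subseteq\mathbb{R}^n_+$ forces $l_i=0$ for every $i$: we have $\min_x\{x_i : g(x)\le0\}\ge0$, hence $-\min\{\min_x\{x_i : g(x)\le0\},0\}=0$. Consequently the maximization defining $\rho$ runs over $\{(s,t) : g(s-t)\le0,\ 0\le s\le u,\ t=0\}$, i.e.\ over $\{s\in\mathcal{C} : 0\le s\le u\}$, and this set is exactly $\mathcal{C}$, since $0\le s$ holds on $\mathcal{C}$ by nonnegativity and $s\le u$ holds by the definition of $u$. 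For $s\in\mathcal{C}$ we have $\sum_i s_i=\|s\|_1\le\lambda$ (using $s\ge0$ and $\mathcal{C}\subseteq\lambda\mathcal{B}_1$), so the maximum is at most $\lambda$ and therefore $\rho=\lfloor\,\cdot\,\rfloor\le\lfloor\lambda\rfloor=\lambda$. Working with the simplified variant of Algorithm~\ref{alg:bound2} from the preceding remark (equivalently, choosing $\delta$ small enough that $(2+\delta)^2\le8$), this yields $\mathrm{bnd}=n^{4\rho^2+1}\le n^{4\lambda^2+1}=O(n^{4\lambda^2+1})$.

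For the exponential case, the strategy is to exhibit a single feasible point of the $\rho$-program with a large objective value. Since $\lambda\mathcal{B}_\infty\cap\mathbb{R}^n_+=\{x : 0\le x_i\le\lambda\ \text{for all } i\}\subseteq\mathcal{C}$, each $\lambda e_i$ lies in $\mathcal{C}$, so $u_i\ge\lambda$; also $l_i\ge0$ unconditionally. Take $s=\lambda\mathbf{1}$ (the all-$\lambda$ vector) and $t=0$. Then $0\le s\le u$ and $0\le t\le l$, and $g(s-t)=g(\lambda\mathbf{1})\le0$ because $\lambda\mathbf{1}\in\lambda\mathcal{B}_\infty\cap\mathbb{R}^n_+\subseteq\mathcal{C}$; the objective value at this point is $\sum_i(s_i+t_i)=n\lambda$. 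Hence the maximum in Algorithm~\ref{alg:bound2} is at least $n\lambda$, and since $\lambda\in\mathbb{N}$ makes $n\lambda$ a nonnegative integer, $\rho\ge n\lambda$. Therefore $\mathrm{bnd}=n^{((2+\delta)\rho)^2/2+1}\ge n^{((2+\delta)n\lambda)^2/2+1}\ge n^{2(n\lambda)^2+1}$, where the last step uses $(2+\delta)^2/2\ge2$, so $\mathrm{bnd}=\Omega(n^{2(n\lambda)^2+1})$.

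To finish, I would compare $n^{2(n\lambda)^2+1}$ with the trivial bound $(1+2\lambda)^n$: taking logarithms gives $(2(n\lambda)^2+1)\log n$ against $n\log(1+2\lambda)$, whose ratio tends to $+\infty$ with $n$; thus $n^{2(n\lambda)^2+1}$ grows super-exponentially and in particular $\mathrm{bnd}=\Omega\big((1+2\lambda)^n\big)$, which is exponential (indeed super-exponential) in $n$. There is no genuine obstacle here beyond careful bookkeeping with the definitions in Algorithm~\ref{alg:bound2}; the one point that needs attention is that, when $\mathcal{C}$ lies in a single orthant, the auxiliary box constraints $0\le s\le u$ and $0\le t\le l$ in the $\rho$-program neither exclude the points of $\mathcal{C}$ itself (so that $\lambda\mathbf{1}$ remains feasible in the exponential case) nor admit points outside $\mathcal{C}$ that could inflate $\rho$ (so that $\rho\le\lambda$ in the polynomial case) — this is precisely the tightness observation recorded in the remark following the previous theorem.
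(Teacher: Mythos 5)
Your proof is correct and follows essentially the same route as the paper: bound $\rho\le\lambda$ in the first case and exhibit $s=\lambda\mathbf{1}$, $t=0$ to get $\rho\ge n\lambda$ in the second, then read off $\mathrm{bnd}$. You merely fill in two details the paper leaves to a remark — why the $(s,t)$-program coincides with maximizing $\|x\|_1$ over $\mathcal{C}$ when $\mathcal{C}\subseteq\mathbb{R}^n_+$ (via $l=0$ forcing $t=0$), and the explicit comparison of $n^{2(n\lambda)^2+1}$ with $(1+2\lambda)^n$ — which is a welcome but not substantively different elaboration.
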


\begin{proof}
In both cases, $\mathcal{C}$ is in the non-negative orthant and so $\max_x\big\{\|x\|_1\ \big|\ g(x)\leq0\big\} = \max_x\big\{\sum_i (s_i+t_i)\ \big|\ g(s-t)\leq0, 0 \leq s \leq u, 0\leq t \leq l\big\}$.  The first case has $\mathcal{C} \subseteq\lambda\mathcal{B}_1\cap\mathbb{R}^n_+$, and so $\max_x\big\{\|x\|_1\ \big|\ g(x)\leq0\big\} \leq \max_x\big\{\|x\|_1\ \big|\ x\in\lambda\mathcal{B}_1\cap\mathbb{R}^n_+\big\} = \lambda$.  Thus the algorithm will compute $\rho \leq \lambda$, which means it will return $\mathrm{bnd} \leq n^{((2+\delta)\lambda)^2/2+1}$ for any $\delta \in (0,1)$. The second case has $\mathcal{C} \supseteq\lambda\mathcal{B}_\infty\cap\mathbb{R}^n_+$, and so $\max_x\big\{\|x\|_1\ \big|\ g(x)\leq0\big\} \geq \max_x\big\{\|x\|_1\ \big|\ x\in\lambda\mathcal{B}_\infty\cap\mathbb{R}^n_+\big\} = n\lambda$.  Thus the algorithm will compute $\rho \geq n\lambda$, which means it will return $\mathrm{bnd} \geq n^{((2+\delta)n\lambda)^2/2+1}$ for any $\delta \in (0,1)$. 
\end{proof}

\begin{remark}
We have used simplified bounds, without $\delta \in (0,1)$, when stating the above proposition.
\end{remark}

\begin{remark}
Algorithm \ref{alg:bound2} returns a hyper-exponential bound $\mathrm{bnd} = \Omega(n^{2(n\lambda)^2+1})$ when $\mathcal{C} \supseteq\lambda\mathcal{B}_\infty\cap\mathbb{R}^n_+$, and we include the weaker (exponential in $n$) bound $\Omega((1+2\lambda)^n)$ to emphasize that our algorithm provides a bound that is consistent with Corollary \ref{cor:intbound}.
\end{remark}

\section{PTAS for Optimizing Lipschitz Problems over the Scaled L1-Ball}
\label{section:ptas}

\begin{algorithm}[t]
\caption{PTAS for continuous optimization of Lipschitz problems}
\label{alg:enum2}
\begin{algorithmic}
\State $f^* := +\infty$
\ForAll{$u \in M_n^{\lfloor\lambda\kappa/\epsilon\rfloor +1}$}
\State $v := \varphi(u)$
\ForAll{$i\in[n]$}
\State $b_i := \mathbf{1}(v_i \neq 0)$
\EndFor
\ForAll{$s \in \mathrm{SignPerm}(b)$}
\State $x := \epsilon/\kappa\cdot(v\circ s)$
\If{$f(x) < f^*\ \mathrm{and}\ g(x) \leq \epsilon$}
\State $x^* = x$
\State $f^* = f(x)$
\EndIf
\EndFor
\EndFor
\end{algorithmic}
\end{algorithm}

In this section, we modify Algorithm \ref{alg:enum1} in order to develop an additive PTAS for \eqref{eqn:formulation}.  Let $x^*$ be any minimizer of (\ref{eqn:formulation}).  Then we define an additive PTAS for (\ref{eqn:formulation}) to be an algorithm that for fixed $\epsilon > 0$ requires a polynomial in $n$ number of oracle operations to compute a solution $\hat{x}$ such that $f(\hat{x}) - f(x^*) \leq \epsilon$, $g(\hat{x})\leq\epsilon$, $\|\hat{x}\|_1\leq\lambda$, and $\hat{x}\in\mathbb{R}^n$.  Recall $\kappa \in\mathbb{R}_+$ is the Lipschitz constant in the sense: $|f(x)-f(y)|\leq\kappa\|x-y\|_\infty$ and $|g_i(x)-g_i(y)|\leq\kappa\|x-y\|_\infty$ for all $i$.  Our Algorithm \ref{alg:enum2} finds such an $\hat{x}$ by enumerating over a set of points that forms an $(\epsilon/\kappa)\mathcal{B}_\infty$ cover of the feasible region in polynomial time, and our final result formally proves this:


\begin{theorem}
If $\epsilon > 0$, and an optimal $x^*$ exists for \eqref{eqn:formulation}; then Algorithm \ref{alg:enum2} is an additive PTAS for \eqref{eqn:formulation} with oracle time complexity $O(n^{((2+\delta)\lfloor\lambda \kappa/\epsilon\rfloor)^2/2+1})$ for any $\delta \in (0,1)$.
\end{theorem}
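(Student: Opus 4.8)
The plan is to mirror the structure of the proof of Theorem~\ref{thm:bigo}, but with a rescaled lattice playing the role of $\mathbb{Z}^n$. The key observation is that Algorithm~\ref{alg:enum2} is exactly Algorithm~\ref{alg:enum1} run with $\lambda$ replaced by $\lambda\kappa/\epsilon$ and with every enumerated integer point $v\circ s$ rescaled by $\epsilon/\kappa$. So first I would establish that the enumerated set is precisely $(\epsilon/\kappa)\big(\lfloor\lambda\kappa/\epsilon\rfloor\mathcal{B}_1\cap\mathbb{Z}^n\big)$, by reusing the bijection argument for $\varphi$ from Lemma~\ref{lemma:phi_lemma} together with the $\mathrm{SignPerm}$ bookkeeping exactly as in Theorem~\ref{thm:bigo}; nothing new is needed here. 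The oracle complexity bound is then immediate: the outer loop runs over $M_n^{\lfloor\lambda\kappa/\epsilon\rfloor+1}$, whose image under $\varphi$ has cardinality $\#(\lfloor\lambda\kappa/\epsilon\rfloor\mathcal{B}_1\cap\mathbb{N}^n)\le n^{((2+\delta)\lfloor\lambda\kappa/\epsilon\rfloor)^2/2}$ by Corollary~\ref{cor:intbound}, and each point generates $O(n)$ further oracle calls, giving $O(n^{((2+\delta)\lfloor\lambda\kappa/\epsilon\rfloor)^2/2+1})$. For fixed $\epsilon,\lambda,\kappa$ this is polynomial in $n$.

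The substantive part is proving the three approximation guarantees for the returned $\hat x$. Feasibility in the L1-ball is easy: any enumerated $x=(\epsilon/\kappa)(v\circ s)$ with $v=\varphi(u)$, $u\in M_n^{\lfloor\lambda\kappa/\epsilon\rfloor+1}$ satisfies $\|x\|_1=(\epsilon/\kappa)\|v\|_1\le(\epsilon/\kappa)\lfloor\lambda\kappa/\epsilon\rfloor\le\lambda$. The constraint $g(\hat x)\le\epsilon$ holds because the algorithm only updates $x^*$ when $g(x)\le\epsilon$ is checked explicitly (and the loop does find at least one such point — see below — so $f^*<+\infty$ on termination). The real content is the objective bound $f(\hat x)-f(x^*)\le\epsilon$. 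For this I would argue that the enumerated lattice $L:=(\epsilon/\kappa)(\mathbb{Z}^n)$ restricted to $\lambda\mathcal{B}_1$ forms an $(\epsilon/\kappa)\mathcal{B}_\infty$-net of $\lambda\mathcal{B}_1$: given the optimizer $x^*\in\lambda\mathcal{B}_1$, consider $\tilde x:=(\epsilon/\kappa)\,\mathrm{round}((\kappa/\epsilon)x^*)$, the nearest point of $L$. Then $\|\tilde x-x^*\|_\infty\le\epsilon/(2\kappa)$, and crucially $\|\tilde x\|_1\le\|x^*\|_1\le\lambda$ because rounding each coordinate toward the nearest integer does not increase $|\cdot|$ coordinatewise when $x^*$ is itself feasible — actually one must be slightly careful: rounding can increase a coordinate's absolute value (e.g.\ $0.6\mapsto 1$). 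The clean fix is to round toward zero, i.e.\ take $\tilde x_i=(\epsilon/\kappa)\lfloor(\kappa/\epsilon)|x^*_i|\rfloor\cdot\mathrm{sign}(x^*_i)$; then $|\tilde x_i|\le|x^*_i|$ so $\|\tilde x\|_1\le\lambda$ and $(\kappa/\epsilon)|\tilde x_i|\in\mathbb{N}$, and $\|\tilde x-x^*\|_\infty<\epsilon/\kappa$. Hence $(\kappa/\epsilon)\tilde x\in\lfloor\lambda\kappa/\epsilon\rfloor\mathcal{B}_1\cap\mathbb{Z}^n$, so by the enumeration claim $\tilde x$ is one of the points visited by the algorithm.

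Having placed $\tilde x$ in the enumerated set, Lipschitz continuity finishes the argument: $|f(\tilde x)-f(x^*)|\le\kappa\|\tilde x-x^*\|_\infty\le\kappa\cdot(\epsilon/\kappa)=\epsilon$, and likewise $|g_i(\tilde x)-g_i(x^*)|\le\epsilon$, so since $g(x^*)\le 0$ we get $g(\tilde x)\le\epsilon$ — meaning $\tilde x$ passes the algorithm's feasibility test and is a candidate for $x^*$. Therefore the algorithm's returned value satisfies $f(\hat x)\le f(\tilde x)\le f(x^*)+\epsilon$, and combined with the $\ell_1$- and $g$-feasibility of $\hat x$ established above, $\hat x$ meets every requirement in the definition of an additive PTAS. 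The main obstacle, and the only place requiring care, is the rounding step: one must round coordinatewise toward zero (not to the nearest integer) so that feasibility $\|\tilde x\|_1\le\lambda$ is preserved while still getting an $\ell_\infty$-displacement strictly below $\epsilon/\kappa$; everything else is a direct transcription of the Theorem~\ref{thm:bigo} argument with the scaling factor $\epsilon/\kappa$ carried through.
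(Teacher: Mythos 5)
Your proposal is correct and follows essentially the same route as the paper: the paper likewise reduces to the Theorem~\ref{thm:bigo} enumeration, and its covering argument uses exactly your ``round toward zero'' construction, taking $v=\lfloor\kappa|y|/\epsilon\rfloor$ and $s=\mathrm{sign}(y)$ so that the enumerated point $x=(\epsilon/\kappa)(v\circ s)$ satisfies $\|y-x\|_\infty\le\epsilon/\kappa$ for any $y\in\lambda\mathcal{B}_1$. The rounding subtlety you flag is real, and the paper's choice of flooring the absolute values resolves it in the same way you do.
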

\begin{proof}
Algorithm \ref{alg:enum2} is a modified version of Algorithm \ref{alg:enum1}, and so the same argument from the proof of Theorem \ref{thm:bigo} implies $v\circ s$ enumerates over all points in $(\lambda\kappa/\epsilon) \mathcal{B}_1\cap \mathbb{Z}^n$.  This means the $x$ in Algorithm \ref{alg:enum2} are such that $\|x\|_1 = (\epsilon/\kappa)\cdot\|v \circ s\|_1 = (\epsilon/\kappa)\cdot\|v\|_1 \leq (\epsilon/\kappa)\cdot\lfloor\lambda\kappa/\epsilon\rfloor \leq \lambda$.  Suppose these $x$ are the centers of copies of $(\epsilon/\kappa)\mathcal{B}_\infty$ that form a covering of $\lambda\mathcal{B}_1$, then there exists a covering ball containing $x^*$.  Let $\tilde{x}$ be the center of this ball, and observe that $f(\tilde{x}) - f(x^*) \leq \kappa\|\tilde{x}-x^*\|_\infty \leq \epsilon$ and similarly $g_i(\tilde{x}) \leq g_i(x^*) + |g_i(\tilde{x}) - g_i(x^*)| \leq \kappa\|\tilde{x}-x^*\|_\infty \leq \epsilon$ for all $i$.  This would mean that a solution $\hat{x}$ with the desired properties is returned by Algorithm \ref{alg:enum2}.  Furthermore, the same argument for Theorem \ref{thm:bigo} yields that the oracle run time complexity for Algorithm \ref{alg:enum2} is $O(n^{((2+\delta)\lfloor\lambda \kappa/\epsilon\rfloor)^2/2+1})$.

Hence the result follows if we can show that the set of all points $x$ generated by the algorithm are the centers of $(\epsilon/\kappa)\mathcal{B}_\infty$ copies that form a covering of $\lambda\mathcal{B}_1$.  Consider any $y \in \lambda\mathcal{B}_1$, and note $\big\lfloor \kappa |y|/\epsilon\big\rfloor \in (\lambda\kappa/\epsilon) \mathcal{B}_1\cap \mathbb{Z}^n_+$.  Thus Algorithm \ref{alg:enum2} must choose $v$ such that $v = \big\lfloor \kappa |y|/\epsilon\big\rfloor$ in some iteration by the same argument from the proof of Theorem \ref{thm:bigo}.  If we let $s = \text{sign}(y)$, then for this $v,s$ the corresponding value of $x$ chosen by the algorithm is $x = (\epsilon/\kappa)\cdot(v\circ s) = (\epsilon/\kappa)\cdot \big\lfloor \kappa |y|/\epsilon\big\rfloor\circ s$.  Thus $\big\|y - x\big\|_\infty = \big\|y - (\epsilon/\kappa)\cdot \big\lfloor \kappa |y|/\epsilon\big\rfloor\circ s\big\|_\infty = \big\||y|\circ s - (\epsilon/\kappa)\cdot \big\lfloor \kappa |y|/\epsilon\big\rfloor\circ s\big\|_\infty \leq \epsilon/\kappa$.  Restated, this argument shows that for any point $y \in \lambda\mathcal{B}_1$, Algorithm \ref{alg:enum2} generates  an $x$ such that $\|y - x\|_\infty \leq \epsilon/\kappa$.  Thus the set of all points $x$ generated by the algorithm are the centers of $(\epsilon/\kappa)\mathcal{B}_\infty$ copies that form a covering of $\lambda\mathcal{B}_1$.
\end{proof}

\begin{remark}
A simplified result implied by the above theorem for $\epsilon > 0$ is that the oracle complexity is $O(n^{4(\lambda \kappa/\epsilon)^2+1})$
\end{remark}

\begin{remark}
Algorithm \ref{alg:enum2} can be modified as in Sect. \ref{sect:wel1} to solve minimization with weighted L1-constraints.
\end{remark}

\begin{remark}
Our results generalize to the mixed-integer optimization problem given by $\min \big\{f(x,y)\ \big|\ g(x,y) \leq 0, \|x\|_1 \leq \lambda, x\in\mathbb{Z}^n, y\in\mathbb{R}^m\big\}$ when the functions $f,g$ are convex in $y$ for each fixed $x$.  In particular, we can use Algorithm \ref{alg:enum1} to enumerate over all possible $x$, and for each fixed $x$ we solve a convex optimization problem.
\end{remark}

\section{Conclusion}

Using a geometric argument based on stochastic process theory and the theory of convex bodies in high-dimensional spaces, we showed the number of integers within a scaled $\ell_1$-ball is polynomial in dimension when the radius of the ball is fixed.  This result was used to develop an algorithm that solves L1-constrained integer programs and has oracle complexity that is polynomial in dimension when the radius of the L1-constraint is fixed. Our result implies polynomial arithmetic time complexity for integer programming with fixed radius L1-constraints and polynomial-time computable objective function and constraints. Next we used these results to develop an additive PTAS for continuous optimization of problems with Lipschitz objective and constraints intersected with an L1-constraint, and we briefly sketched how these approaches generalize to mixed-integer optimization.

\bibliographystyle{IEEEtran}
\bibliography{IEEEabrv,l1opt}

\end{document}